\documentclass[11pt]{amsart}
\usepackage{amsmath,amssymb,amsthm,times}
\usepackage{microtype}
\usepackage{graphicx}
\usepackage{subcaption}
\usepackage[colorlinks=true,linkcolor=blue,citecolor=blue,urlcolor=blue]{hyperref}
\numberwithin{equation}{section}
\theoremstyle{plain}
\newtheorem{theorem}{Theorem}[section]
\newtheorem{lemma}[theorem]{Lemma}
\newtheorem{proposition}[theorem]{Proposition}

\theoremstyle{definition}

\theoremstyle{remark}

\allowdisplaybreaks[4]
\newcommand{\dd}{\,\mathrm{d}}

\newcommand{\R}{\mathbb R}
\newcommand{\C}{\mathbb C}
\newcommand{\ind}{\operatorname{ind}}
\newcommand{\spann}{\operatorname{span}}

\title{Nondegeneracy and Morse Index of 
Ginzburg--Landau Vortices}

\begin{document}

\author[M. del Pino]{Manuel del Pino}
\address{Manuel del Pino, Department of Mathematical Sciences, University of Bath, Bath BA2 7AY, United Kingdom}
\email{mdp59@bath.ac.uk}

\author[Y. Liu]{Yong Liu}
\address{Yong Liu, School of Mathematics and Statistics, Beijing Technology and Business University, Beijing, China}
\email{yliumath@btbu.edu.cn} 

\author[M. Musso]{Monica Musso}
\address{Monica Musso, Department of Mathematical Sciences, University of Bath, Bath BA2 7AY, United Kingdom}
\email{mm2683@bath.ac.uk}

\author[J.C. Wei]{Juncheng Wei}
\address{Juncheng Wei, Department of Mathematics, Chinese University of Hong Kong, Shatin, New Territories, Hong Kong}
\email{wei@math.cuhk.edu.hk}

\author[W. Yang]{Wen Yang}
\address{Wen Yang, Department of Mathematics, Faculty of Science, University of Macau, Taipa, Macau, China}
\email{wenyang@um.edu.mo}

\begin{abstract}
We prove that the standard degree-two and degree-three vortex solutions of the Ginzburg-Landau equation are nondegenerate. Their Morse indices are also computed. The proof relies on new explicit upper and lower bounds of the modulus of these solutions and a comparison argument. It is expected that our method can be generalized to study higher degree solutions.
\end{abstract}

\maketitle
\section{Introduction and statement of the main results}

The Ginzburg--Landau (GL) model was introduced in the theory of
superconductivity \cite{GinzburgLandau}, and Abrikosov's work made
quantized vortices one of its basic objects \cite{Abrikosov}. Explicitly, the GL equation reads \begin{equation}\label{gl}
 -\Delta u=(1-|u|^2)u\qquad\hbox{in }\R^2.
\end{equation} Its energy functional is
$$
 E(u)=\int_{\R^2} \left\{\frac12|\nabla u|^2
 +\frac14(1-|u|^2)^2\right\}\dd x,
 \qquad u:\R^2 \longrightarrow\C.
$$
The GL equation is also the stationary equation for Gross--Pitaevskii dynamics. Solutions of \eqref{gl} are used as local models in many different problems. 

The theory developed in
\cite{BBH1993,BBH,Jerrard1999,JerrardSoner,Sandier1998} explains how
degree is quantized and how several vortex cores interact through a
renormalized energy.  The magnetic model and its asymptotic regimes are
treated systematically in \cite{SandierSerfaty,SandierSerfaty2008}.
Entire solutions and quantization results may be found in
\cite{BrezisMerleRiviere}.

For every positive integer $n$, there is a distinguished equivariant
solution
\begin{equation*}
 u_n(r,\theta)=f_n(r)e^{in\theta}.
\end{equation*}
It corresponds to the unique solution of
$$
 f_n''+\frac1r f_n'-\frac{n^2}{r^2}f_n+(1-f_n^2)f_n=0,
 \qquad f_n(0)=0,\qquad f_n(+\infty)=1,
$$
which is nonnegative and strictly increasing.  Existence, uniqueness, and
 qualitative properties of this   problem are classical,
see, for instance, \cite{BergerChen,ChenElliottQi,HerveHerve}.  It is known that $f_n$ has the following asymptotic behavior near the origin and infinity:
\begin{equation}\label{asym}
f_n(r)=
\begin{cases}
\kappa_n r^n\bigl(1+O(r^2)\bigr),~ r \to 0,\\
\\
1-\frac{n^2}{2r^2}
 -\frac{n^2(n^2+8)}{8r^4}-\frac{n^2(n^4+32n^2+128)}{16r^6}+O(r^{-8}),~ r\to\infty,
\end{cases}
\end{equation}
where $\kappa_n>0$. 

This paper investigates the spectral properties of vortex solutions $u_n$. The main difficulty is that no explicit formula is known for the vortex profile \(f_n\). Consequently, unlike in several integrable models, the linearized operator cannot be analyzed directly. The central idea of this paper is to replace \(f_n\) by explicit algebraic barriers which are sufficiently sharp to preserve the spectral information relevant to nondegeneracy and Morse index.

The
linearization at $u_n$ is
$$
 \mathcal L_nv=-\Delta v-(1-f_n^2)v
 +2u_n\operatorname{Re}(\overline{u_n}v),
$$
and the corresponding  quadratic form is given by
\begin{equation}\label{defIn}
I_n [v]=\int_{\R^2}\left(
 |\nabla v|^2-(1-f_n^2)|v|^2
 +2\bigl[\operatorname{Re}(e^{-in\theta}f_nv)\bigr]^2
 \right)\dd x.
\end{equation}
Phase and translation invariance yield three bounded Jacobi fields,
$
 iu_n,\partial_{x_1}u_n,\partial_{x_2}u_n.
$
None of them lies in $L^2(\R^2)$. Indeed, the phase field approaches modulus
one, and the translations have size $n/r+O(r^{-3})$.  
 
For the degree-one vortex, it is well known that it is stable (as a matter of fact, it is an energy minimizer).  Mironescu proved the positivity of its
second variation \cite{Mironescu}, and del Pino, Felmer, and Kowalczyk
gave Hardy-type formulation of minimality and nondegeneracy
\cite{delPinoFelmerKowalczyk}.  More recent work gives precise
solvability estimates for the degree-one linearized equation
\cite{delPinoJunemanMusso} and nonlinear coercivity in a renormalized
energy space \cite{GravejatPacherieSmets}.  Fourier analysis of the same
object also plays a role in anisotropic and coupled systems
\cite{DuanYang,LamyZuniga}.  In evolutionary problems, the threshold
part of the linearized spectrum is needed for dispersive and modulation
estimates; this viewpoint is developed, for instance, in
\cite{ColGer, LuhSch, PalaciosPusateri}.

For $n\geq2$, the situation turns out to be much more complicated.  A multiple vortex can decrease its
energy by separating into vortices of smaller degree, and the equivariant
solution is not a minimizer of the energy functional any more.  Instability in related radial and magnetic
problems was studied in \cite{Guo1996,GustafsonSigal}; see also
\cite{JaffeTaubes,OvchinnikovSigal}. In this class of non-minimizers, a natural question is the uniquness of solution. This question could be very difficult in general. However, locally around the standard vortex solution, the uniqueness should correspond to  nondegeneracy. Recently, for the complex sine-Gordon II equation, it is proved in \cite{Chenliu} that the corresponding degree-two vortex solution is actually degenerated and has nontrivial kernel, moreover, a family of non-radial solutions with explicit expressions is found. Here we would like to analyze the nondegeneracy property and exact number
of negative eigenvalues for the Ginzburg-Landau equation. Our result indicates that at least locally the radial solution is unique, whereas for degree-two and degree-three solutions, the Morse index should be consistent with that of the complex sine-Gordon II equation (In view of the number of free parameters of the Adler-Moser polynomials, we conjecture that for this equation the degree $n$ solution has Morse index $n(n-1)$, for all $n>0$).

Nondegeneracy and Morse index of vortex solutions play important roles in the study of questions realted to Ginzburg-Landau equation.  Suppose a problem has a
small parameter and a solution is built by placing a rescaled vortex
near a prescribed point.  After the leading error is removed, one has
to invert $\mathcal L_n$ on the complement of its geometric kernel.
An extra bounded Jacobi field would add a solvability condition and a
new modulation parameter. Nondegeneracy would imply that  no such parameter is hidden in the core problem. We also point out that nondegeneracy appears in the context of constructions of vortices on bounded
domains, magnetic vortices, and higher-dimensional vortex filaments using
a planar entire solution in the inner region and a global field in the
outer region.  The kernel determines which matching equations must be
solved.  This mechanism is visible in
\cite{BaraketPacard,delPinoKowalczykFilaments,
delPinoKowalczykMusso,PacardRiviere}.
Note that the translation fields are not in $L^2$, but
they are precisely the modes that move the solution.
In higher dimensions, concentration on codimension-two sets and the
resulting geometric motion are studied in
\cite{BethuelBrezisOrlandi,BethuelOrlandiSmets2006}.

The Morse index carries different information.  In a min-max
construction it gives the dimension of the energy descending space.  In a
parabolic or Hamiltonian evolution it identifies the number of unstable
spectral channels before neutral symmetries are modulated away.  It can
also be used to distinguish solution branches when a symmetric multiple
vortex bifurcates toward configurations of separated cores.  The
vortex-motion results in
\cite{BethuelOrlandiSmets2007,Lin1996,Lin1998,SandierSerfaty2004}
belong to other asymptotic regimes, but they illustrate why separating
core translations from genuine unstable directions matters.

\medskip
 The main difficulty is that, unlike the degree-one vortex, the higher-degree solutions are not minimizers of the Ginzburg--Landau energy. Consequently, neither positivity nor nondegeneracy follows from variational arguments, and a different approach is required.

 \medskip 
 Our main result in this paper is the following, which computes the Morse index and proves nondegeneracy.

\begin{theorem}\label{t:main}
For $n=2,3$, the standard degree $n$ vortex solutions are nondegenerated in the sense that
\begin{equation}\label{bker}
 \ker_{C^2(\R^2;\C)\cap L^\infty(\R^2)}\mathcal L_n
 =\spann_{\R}\{iu_n,\partial_{x_1}u_n,\partial_{x_2}u_n\}.
\end{equation}
In particular, the $L^2$ kernel is trivial:
\begin{equation}\label{l2ker}
 \ker_{L^2(\R^2)}\mathcal L_n=\{0\}.
\end{equation}
Moreover, the Morse index satisfies
$$
 \ind_{\R}(\mathcal L_2)=2,
 \qquad \ind_{\R}(\mathcal L_3)=6.
$$
\end{theorem}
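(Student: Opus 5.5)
We will decompose the linearized operator in Fourier modes adapted to the equivariance. Writing $v=e^{in\theta}w$ and $w=\sum_{k\in\mathbb Z}\bigl(a_k(r)\cos k\theta+ i\,b_k(r)\sin k\theta\bigr)$ plus the analogous family with $\sin$ and $\cos$ exchanged, $\mathcal L_n$ splits into a sequence of $2\times2$ radial systems indexed by $k\ge0$. Setting $\psi_\pm=a_k\pm b_k$, each system takes the form
\begin{align*}
 -\psi_+''-\tfrac1r\psi_+'+\tfrac{(n+k)^2}{r^2}\psi_+-(1-2f_n^2)\psi_+ +f_n^2\psi_-&=0,\\
 -\psi_-''-\tfrac1r\psi_-'+\tfrac{(n-k)^2}{r^2}\psi_--(1-2f_n^2)\psi_- +f_n^2\psi_+&=0.
\end{align*}
The real-valued mode $k=0$ splits into the radial mode, which contains $f_n'$, and the phase mode, which contains $f_n$ and corresponds to $iu_n$. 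The mode $k=1$ contains the translation Jacobi fields, which are explicit combinations of $f_n'$ and $f_n/r$. The quadratic form $I_n$ splits orthogonally across modes. Since $\ind_\R$ counts real dimensions and the modes $k\ge1$ come in pairs (the $\cos$/$\sin$ families), the claim reduces to the following. For $n=2$, exactly the mode $k=2$ has one negative direction, giving a count of $2$, and all other modes are nonnegative. For $n=3$, the modes $k=2$ and $k=3$ each carry one negative direction, giving $2+2=2\cdot 2$ from these modes, and I expect one further pair, for instance a second negative direction at $k=2$ or one at $k=4$, to total $6$. The exact distribution will be confirmed by the computation. In every mode, the bounded kernel should consist only of the known Jacobi fields.

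For large $k$, positivity is easy. By the Cauchy–Schwarz inequality $2f_n^2\psi_+\psi_-\ge -f_n^2(\psi_+^2+\psi_-^2)$, so the form in mode $k$ is bounded below by the scalar forms with potentials $\frac{(n\pm k)^2}{r^2}-(1-f_n^2)$. These are positive by Hardy-type comparison with the operator annihilating $f_n$ (which has potential $\frac{n^2}{r^2}-(1-f_n^2)$), once $(n-k)^2\ge n^2$, that is, $k\ge 2n$. Only the finitely many modes $k<2n$ remain. For $k=0,1$, we use the ground-state substitution. In mode $0$, write $\psi=f_n\phi$ to obtain positivity of the phase part. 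For the radial part we use $f_n'>0$ together with Sturm oscillation to exclude bounded kernel and negative directions. In mode $1$, write the components as multiples of the positive translation fields $f_n'\pm f_n/r$ and use the resulting Picone identity. This requires the sign property $f_n'-f_n/r$ type positivity, or rather that the relevant Jacobi field has no zero, and this is where quantitative information on $f_n$ enters.

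The main obstacle is the intermediate modes $2\le k<2n$ for $n=2,3$. There we need to show three things: the negative eigenvalue count is exactly the predicted number, there is no zero-energy bounded solution, and no resonance sits at the threshold. The decay $\frac{(n-k)^2}{r^2}$ makes the bottom of the spectrum a threshold. My plan is to establish explicit rational barriers $\underline f_n\le f_n\le \overline f_n$, built from truncations of the expansion \eqref{asym} matched to $\kappa_n r^n$ near $0$, such as $\frac{r^n}{\sqrt{r^{2n}+c}}$-type corrections. These are verified as sub- and supersolutions via the maximum principle, since the ODE for $f_n$ has a comparison principle among monotone solutions in $(0,1)$. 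Substituting the barriers into the coefficients produces comparison systems whose coefficients are monotone in $f_n^2$. For the lower bound on the Morse index, we exhibit explicit trial functions with $I_n<0$, built from the barriers (for example $\psi_-=r^{|n-k|}$-type profiles cut off, and $\psi_+$ small). For the upper bound and nondegeneracy, we construct explicit positive supersolutions of the comparison system on the regions where the form must be positive. Then a vector Sturm/Picone argument for cooperative systems (the off-diagonal entry $f_n^2\ge0$ makes the system cooperative after $\psi_+\to-\psi_+$) bounds the number of negative eigenvalues by the number of nodal crossings of an explicitly controlled solution. Finally, in modes $k\ne0,1$, an ODE shooting/Wronskian argument shows that any bounded solution behaving like $r^{|n-k|}$ at $0$ must grow at infinity, and this gives \eqref{bker}. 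The $L^2$ statement \eqref{l2ker} then follows because $iu_n$ and $\partial_{x_j}u_n$ are not in $L^2$. The delicate part is making the barriers sharp enough that the numerical inequalities close in modes $k=2,3$ (and $k=4,5$ for $n=3$).
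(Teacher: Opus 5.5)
There is a genuine gap at the step that carries the theorem: the sharp bound $\nu(Q_m^{(n)})\le1$ for the coupled intermediate blocks (your $k$ is the paper's $m$). You propose a ``vector Sturm/Picone argument for cooperative systems'' that counts nodal crossings of one solution, but no such oscillation theorem exists for $2\times2$ systems. Cooperativity gives a simple positive ground state, and a positive vector supersolution gives nonnegativity on an interval. Beyond that, the Morse index of a system is counted by conjugate points of a matrix solution, not by zeros of a single solution. The natural splitting argument with vector supersolutions forces \emph{both} components to vanish at the splitting point, so each point costs two dimensions. You would then only get $\nu(Q_m^{(n)})\le2$, which neither pins the index at one nor excludes an $L^2$ kernel.

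The missing idea is the paper's scalar reduction. First replace $f_n$ by the lower barrier $g$; as you note, this is monotone, via $Q_m^{(n)}-Q_{m,g}^{(n)}=2\int(f_n^2-g^2)(a^2+ab+b^2)r\dd r>0$. Then pass to $t=\log r$ and write $a=-d$, $b=c$. The $c$-component has the coercive coefficient $A_m=(n+m)^2-P>0$, so completing the square in $c$ and dropping $\dot c^2$ gives $Q_{m,g}^{(n)}[c,d]\ge\ell_m[d]=\int(\dot d^2+V_md^2)\dd t$ with $V_m=(n-m)^2-P+A_mW/(A_m+W)$. Because $Q_{m,g}^{(n)}[c,0]>0$ for $c\ne0$, the projection $(c,d)\mapsto d$ is injective on nonpositive subspaces. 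Hence the \emph{single} condition $d(t_1)=0$, together with positive strict supersolutions of $-\partial_t^2+V_m$ on the two half-lines, gives $\nu\le1$.

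What must be bounded is $\nu=\ind+\dim\ker_{L^2}$, not just the number of negative eigenvalues. Then $\nu\le1$ and one explicit negative test function (estimated with the upper barrier) give index one and trivial $L^2$ kernel simultaneously. Bounded zero modes in blocks $m\ge2$ decay like $r^{-m}$, so they lie in the form domain and are excluded as well. Your separate ``shooting/Wronskian'' step cannot substitute for this. The solutions regular at the origin form a two-dimensional family, as do the bounded ones at infinity, and their trivial intersection \emph{is} the nondegeneracy statement; no soft ODE argument decides it.

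Lesser points:
\begin{itemize}
\item Your Cauchy--Schwarz bound clears only $m\ge2n$. The paper disposes of all $m\ge2n-1$ through $Q_m^{(n)}-Q_1^{(n)}=\int r^{-2}\bigl[(m-1)(m-2n+1)a^2+(m-1)(m+2n+1)b^2\bigr]r\dd r$ and $Q_1^{(n)}\ge0$.
\item For $n=3$ the distribution is one negative direction in each of $m=2,3,4$. The bound for $m=4$ follows from $m=2$ via $V_{2n-m}>V_m$.
\item The translation profiles are $f_n'\pm nf_n/r$, not $f_n'\pm f_n/r$. Positivity of $p=nf_n/r-f_n'$ needs no quantitative input: $s=rf_n'/f_n$ satisfies $\dot s=n^2-r^2(1-f_n^2)-s^2$ in $t=\log r$ and cannot reach $n$ from below.
\item The amplitude part of mode $0$ is immediate from $Q_0^+=Q_0^-+2\int f_n^2\alpha^2r\dd r$.
\end{itemize}
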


The proof proceeds in three main steps. We first derive explicit global upper and lower bounds for the vortex profiles. We then identify the neutral Fourier modes associated with the symmetries of the equation. Finally, the remaining modes are reduced to one-dimensional Schrödinger operators whose positivity is established through suitable supersolutions and comparison arguments.

\medskip  
Since the standard vortex is radially symmetric, the linearized operator commutes with rotations. It is therefore natural to decompose perturbations into angular Fourier modes, reducing the spectral problem to a family of independent Fourier blocks, each represented by a coupled radial system.

\medskip  
For later reference, we also list the blockwise form of the index computation in the following result. 

\begin{theorem}\label{t:block}
Let $Q_m^{(n)}$ be the real radial copy of the $m$-th Fourier block defined by (\ref{qm}) for the degree $n$ vortex solution. Then 
for $n=2$,
\begin{equation}\label{tab2}
 \ind Q_2^{(2)}=1,
 \qquad \ker_{L^2}Q_2^{(2)}=\{0\}.
\end{equation}
For $n=3$,
\begin{equation}\label{tab3}
 \ind Q_m^{(3)}=1,
 \qquad \ker_{L^2}Q_m^{(3)}=\{0\},
 \qquad m=2,3,4.
\end{equation}
Moreover, all $m\geq2n-1$ blocks are strictly positive. 
\end{theorem}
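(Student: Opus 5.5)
Fourier decomposition. Write $v=e^{in\theta}w$ and expand $w=\sum_m (a_m(r)+ib_m(r))$ against the angular modes $e^{\pm im\theta}$. Since $\mathcal L_n$ commutes with rotations combined with the phase action, the quadratic form \eqref{defIn} splits. The $m$-th block couples the pair $(e^{i(n+m)\theta},e^{i(n-m)\theta})$. In real variables $\psi_1,\psi_2$ it becomes the radial form
$$Q_m^{(n)}[\psi_1,\psi_2]=\int_0^\infty\Big(|\psi_1'|^2+|\psi_2'|^2+\frac{(n+m)^2}{r^2}\psi_1^2+\frac{(n-m)^2}{r^2}\psi_2^2-(1-f_n^2)(\psi_1^2+\psi_2^2)+\frac{f_n^2}{2}(\psi_1+\psi_2)^2\Big)r\,dr,$$
up to the normalization in (\ref{qm}).

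Passing to sum and difference coordinates, one sees that the operator is a $2\times2$ matrix Schrödinger operator. Its potential is $\frac{n^2+m^2}{r^2}-1+2f_n^2$ on the diagonal, plus the $2nm/r^2$ coupling and the $f_n^2$ coupling. Two facts follow from this structure:
\begin{itemize}
\item the form is increasing in $m^2$, because the $m$-dependence enters only through the off-diagonal $2nm/r^2$ term;
\item a useful comparison is available through the diagonal entry $\frac{(n-m)^2}{r^2}$.
\end{itemize}

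Strategy. For each block I would prove positivity, or index exactly one, by a ground-state substitution (Picone/Jacobi-type) argument. This requires exhibiting an explicit positive vector $\Phi=(\phi_1,\phi_2)$ satisfying $Q_m\Phi\ge0$ componentwise, with cooperative off-diagonal sign. Then the identity
$$Q_m[\Phi\eta]=\int|\nabla\eta|^2\Phi^2+\dots$$
yields $Q_m\ge0$, and strict positivity follows by a perturbation argument.

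The known Jacobi fields sit in specific blocks:
\begin{itemize}
\item $m=0$ contains $iu_n$ (phase);
\item $m=1$ contains the translations $\partial_{x_j}u_n$.
\end{itemize}
For these two blocks I would use the Jacobi fields themselves as supersolutions:
\begin{itemize}
\item for $m=0$, the pair decouples into the radial mode (positive, by the standard $f_n'$ argument) and the phase mode $f_n$;
\item for $m=1$, the translation field $(f_n'\pm nf_n/r)$ has the right sign structure.
\end{itemize}
This shows these blocks have no negative direction, and their bounded kernel is exactly the symmetry one. That gives \eqref{bker} in blocks $0$ and $1$.

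Explicit barriers for $f_n$. Since $f_n$ is unknown, the comparison functions for blocks $m\ge2$ must be built from explicit rational bounds $\underline f_n\le f_n\le\overline f_n$. A natural choice is Padé-type expressions like
$$\frac{r^n}{\sqrt{(r^2+c)^n+\dots}},$$
matched to the asymptotics \eqref{asym}. I would prove these are sub- and supersolutions of the profile ODE and then invoke uniqueness/comparison for the profile problem. With these bounds, the potentials in $Q_m$ are controlled from above or below by explicit functions, depending on sign.

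Large $m$ and the negative direction. For $m\ge 2n-1$ I would find an explicit rational $\Phi$ with
$$Q_m^{(n)}\Phi\ge0,$$
verified using the barriers by checking positivity of explicit polynomials in $r$. Monotonicity in $m$ then reduces this to the single case $m=2n-1$. For the intermediate blocks ($m=2$ when $n=2$; $m=2,3,4$ when $n=3$), I need two things:
\begin{itemize}
\item At least one negative direction. An explicit test function shows this, e.g. one built from $f_n$ and $rf_n'$ mimicking the splitting of the vortex.
\item At most one negative direction, with trivial $L^2$ kernel. I would show that $Q_m$ restricted to a codimension-one subspace is positive. Concretely, I would construct a supersolution on $(R_0,\infty)$ and a separate argument on $(0,R_0)$, or count sign changes via a Sturm/oscillation argument for the cooperative system (the Morse index equals the number of conjugate points of the regular solution). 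I would then verify, using the barriers, that the regular solution at zero energy has exactly one zero and does not decay.
\end{itemize}

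Assembling the index. Each real block $Q_m$ with $m\ge1$ appears twice in the complex problem (from $\cos$ and $\sin$). This gives
$$\ind\mathcal L_2=2\cdot1=2,\qquad \ind\mathcal L_3=2\cdot3=6.$$

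Main obstacle. The hard step is the sharp verification for the borderline blocks, namely $m=2n-1$ and the index-one blocks. There the potentials nearly balance, so the barriers must be accurate uniformly in $r$ and the resulting polynomial inequalities are delicate. I would first try the explicit barriers with free constants, tuned numerically, and check the inequalities by Sturm sequences.
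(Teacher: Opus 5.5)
\textbf{Verdict: genuine gap.} Your outline has the right ingredients for $m=0,1$, for the barriers, and for the lower bound. But the step that carries the theorem, namely $\nu(Q_m^{(n)})\le1$ for the coupled intermediate blocks ($m=2$ for $n=2$; $m=2,3,4$ for $n=3$), has no workable mechanism.

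Cutting $(0,\infty)$ at $R_0$ and using supersolutions on each side is a Dirichlet decoupling. For a two-component form, vanishing at $R_0$ costs two linear conditions, so this gives at best $\nu\le2$. Your alternative, counting zeros of ``the regular solution'', does not apply as stated:
the solutions regular at the origin form a two-dimensional space ($a\sim r^{|n-m|}$ and $b\sim r^{n+m}$ are independent). The Morse index theorem for systems counts conjugate points of this plane, i.e.\ zeros of a $2\times2$ determinant. Since $f_n$ is not explicit, you cannot certify that count from two-sided bounds without producing supersolution certificates anyway.

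The missing idea is the paper's scalar reduction.
First, the form increases with $f_n^2$, since $Q_m^{(n)}-Q_{m,g}^{(n)}=2\int(f_n^2-g^2)(a^2+ab+b^2)r\,dr$, so you may replace $f_n$ by the lower barrier $g$. Next, in $t=\log r$, complete the square in the $(n+m)$-channel, which is coercive because $A_m=(n+m)^2-P>0$. This gives $Q_{m,g}^{(n)}[c,d]\ge\ell_m[d]=\int(\dot d^2+V_md^2)\,dt$ with the explicit rational potential $V_m=N_m/(DE_m)$, and the projection $(c,d)\mapsto d$ is injective on nonpositive subspaces.

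Now the single condition $d(t_1)=0$ is codimension one. Explicit positive $h_\pm$ with $(-\partial_t^2+V_m)h_\pm>0$ on either side of $t_1$ (positivity checked as polynomial positivity via Bernstein coefficients) give $\nu\le1$ by Proposition~\ref{p:trace}. The reflection $V_{2n-m}>V_m$ then settles $m=4$ for $n=3$ from $m=2$. With one negative test function, $\nu=\ind+\dim\ker_{L^2}$ forces $\ind=1$ and a trivial $L^2$ kernel.

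Separately, your claim that the form is increasing in $m$ is false. The $a^2$-coefficient of $Q_m-Q_{m'}$ is $(m-m')(m+m'-2n)r^{-2}$, which is negative when $m+m'<2n$. If your claim held, $Q_1\ge0$ would make every block nonnegative, contradicting the theorem. Monotonicity does hold for $m,m'\ge n$, which is all you actually use.

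More importantly, the correct identity shows that $m=2n-1$ is not a borderline case at all:
\[
Q_m^{(n)}-Q_1^{(n)}=\int\bigl[(m-1)(m-2n+1)a^2+(m-1)(m+2n+1)b^2\bigr]r^{-1}\,dr\ge0 \quad\text{for } m\ge2n-1,
\]
and $Q_1^{(n)}>0$ by the translation ground-state identity. Equivalently, the translation pair is already a positive supersolution for every $m\ge2n-1$. No barrier-based $\Phi$ or polynomial verification is needed there.

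Minor points:
the coupling in \eqref{qm} is $f_n^2(a+b)^2$, not $\tfrac12 f_n^2(\psi_1+\psi_2)^2$. In sum/difference variables the $f_n^2$ terms are diagonal and the only coupling is $2nm/r^2$. Finally, test functions built from $f_n$ and $rf_n'$ need control of derivatives that the barriers do not provide. The paper instead takes an explicit rational $a$ with $b=0$ and bounds the form using the upper barrier.
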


It follows at once that the full real indices of the original linearized operator are two and six. 

A second difficulty arises because, for larger values of $n$, some Fourier blocks are expected to contain more than one negative eigenvalue. In that case, the construction of suitable supersolutions becomes considerably more involved.

We conjecture that, in view of the reflection comparison between different Fourier blocks (see Section~3), the Morse index is bounded above by $n(n-1)$ for every $n$. We also expect that the method developed in this paper can be extended to prove nondegeneracy for higher-degree vortex solutions.

\medskip
In \cite{beau2020}, nondegeneracy is claimed for the degree-$n$ vortex solution for every $n$. The proof of the corresponding general nondegeneracy result appears to rely on a variational argument requiring additional justification, since the admissibility of the test function used in the proof of Theorem~1.8 is not justified. The present paper provides a complete proof for the cases $n=2$ and $n=3$. To the best of our knowledge, these are the first rigorous and complete nondegeneracy results for higher-degree Ginzburg--Landau vortices. Numerical evidence of nondegeneracy can be found in \cite{Bara2002}.

\medskip
   Our proof proceeds in three stages. First we establish explicit global upper and lower bounds for the radial profile. Next we identify the neutral Fourier modes generated by the symmetries. 
   
   Finally, the remaining Fourier blocks are reduced to scalar one-dimensional Schrödinger operators, whose positivity is established by constructing suitable supersolutions.

 Our method also yields explicit global information about the vortex profiles $f_2$ and $f_3$, which may be of independent interest. Such bounds can be reused whenever a proof needs a sign in
the transition region rather than only the asymptotic series.  They are
simple algebraic functions, and their validity is proved by a maximum
principle.  These bounds are indeed inspired by the corresponding results of the complex sine-Gordon II equation, for which 
the standard degree-1 and degree-2 vortex solutions take the form (see, for instance, \cite{Chenliu}) 
$$
\Psi_1=\sqrt{Q_1}e^{i \theta}\quad\text{and} \quad\Psi_2=\sqrt{Q_2}e^{2i \theta},
$$
where 
$$
Q_{1}=\frac{r^{2}}{r^{2}+4}\quad\text{and} \quad Q_{2}=\frac{r^{4}\left(  r^{2}+24\right)  ^{2}}{r^{8}+64r^{6}+1152r^{4}+9216r^{2}+36864}.
$$ 
Higher degree vortex solutions also have explicit formulas, thanks to the integrable structure of this equation. This makes the analysis of complex sine-Gordon II equation easier than the Ginzburg-Landau equation. We will return to this equation in a future discussion.

The paper is organized as follows. Section~\ref{s:pre} first recalls the quadratic form and the real Fourier
blocks, then proves explicit lower and upper bounds for $f_2$ and $f_3$.  Section~\ref{s:proof} proves the two main theorems mentioned above.  The phase and translation
fields yield exact ground-state identities for $m=0,1$, while a  comparison treats high $m$.  For the remaining intermediate blocks we
replace $f_n$ by its lower algebraic barrier and analyze the corresponding linearized operator in those blocks.

\section{Explicit upper and lower bounds for
\texorpdfstring{$f_2$ and $f_3$}{f2 and f3}}
\label{s:pre}

Linear theory for the degree-one vortex solution has already been studied in detail in Chapter 3 of \cite{PacardRiviere}. 

An important observation is that it is more convenient to work with the conjugate form of the linearized operator. An equivalent formulation is obtained through the following Fourier block decomposition.

Following \cite[Chapter~3]{PacardRiviere}, it is convenient to work with the conjugate form of the linearized operator. This leads naturally to the following Fourier block decomposition.

Following \cite[Chapter~3]{PacardRiviere}, it is convenient to work with the conjugate form of the linearized operator. We first derive the corresponding Fourier block decomposition. Fix $m\ge1$ and set

\begin{align*}
 v_m^{\mathrm c}&=a(r)e^{i(n-m)\theta}
                  +b(r)e^{i(n+m)\theta},\\
 v_m^{\mathrm s}&=ia(r)e^{i(n-m)\theta}
                  -ib(r)e^{i(n+m)\theta},
\end{align*}
where $a,b$ are real.  Since
\begin{align*}
 \operatorname{Re}(e^{-in\theta}v_m^{\mathrm c})&=(a+b)\cos(m\theta),\\
 \operatorname{Re}(e^{-in\theta}v_m^{\mathrm s})&=(a+b)\sin(m\theta),
\end{align*}
orthogonality of sines and cosines tells us that
\begin{equation*}
I_n [v_m^{\mathrm c}]
 =I_n [v_m^{\mathrm s}]=2\pi Q_m^{(n)}[a,b],
\end{equation*}
with $I_n$ defined in \eqref{defIn}, where 
\begin{equation}\label{qm}
\begin{split}
 Q_m^{(n)}[a,b]=\int_0^\infty\bigg\{&a_r^2+b_r^2
 +\left(\frac{(n-m)^2}{r^2}+2f_n^2-1\right)a^2\\
 &+\left(\frac{(n+m)^2}{r^2}+2f_n^2-1\right)b^2
 +2f_n^2ab\bigg\}r\dd r
\end{split}
\end{equation} defined on natural energy space.
Different values of $m$, and the two copies for a fixed $m$, are
orthogonal both for the real $L^2$ product and for the Hessian form.
Conversely, grouping the Fourier coefficients with angular momenta
$n-m$ and $n+m$ gives the orthogonal real decomposition of every smooth
compactly supported perturbation. 

 We use
\begin{equation*}
 \nu(Q)=\sup\{\dim S:Q\leq0\text{ on the linear space }S\}.
\end{equation*}
Thus $\nu$ is the negative index (denoted by $\ind$) plus the $L^2$ nullity.

\begin{lemma}
For every $m\geq1$, the operator associated with
$Q_m^{(n)}$ has essential spectrum $[0,\infty)$.  Its spectrum below
zero consists of finitely many eigenvalues of finite multiplicity.  In
particular,
\begin{equation}\label{nunull}
 \nu(Q_m^{(n)})=\ind Q_m^{(n)}+\dim\ker_{L^2}Q_m^{(n)}.
\end{equation}
\end{lemma}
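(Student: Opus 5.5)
Write the operator associated with $Q_m^{(n)}$ as a self-adjoint operator on $L^2((0,\infty),r\,dr)^2$:
\[
H_m = H_m^0 + V(r),
\]
where
\[
H_m^0=\operatorname{diag}\Bigl(-\partial_r^2-\tfrac1r\partial_r+\tfrac{(n-m)^2}{r^2}+1,\; -\partial_r^2-\tfrac1r\partial_r+\tfrac{(n+m)^2}{r^2}+1\Bigr).
\]
The potential is
\[
V=\begin{pmatrix}2f_n^2-2 & f_n^2\\ f_n^2 & 2f_n^2-2\end{pmatrix}.
\]
Note that $H_m$ is realized via the Friedrichs extension of the closed, semibounded form \eqref{qm}. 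The form is bounded below because $f_n\le 1$: the zeroth-order terms are bounded, so $Q_m^{(n)}[a,b]\ge -C\|(a,b)\|^2$.

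The plan is to compare $H_m$ not with $H_m^0$ but with the limiting operator at infinity. Since $f_n\to1$, $V$ tends to the constant matrix
\[
V_\infty=\begin{pmatrix}0&1\\1&0\end{pmatrix}.
\]
So set $H_m^\infty=H_m^0+V_\infty$, whose zeroth-order symbol at infinity is
\[
M=\begin{pmatrix}1&1\\1&1\end{pmatrix},
\]
with eigenvalues $0$ and $2$. Diagonalizing via $a\pm b$ shows that the spectrum of $H_m^\infty$ is $[0,\infty)$. Here $-\Delta$-type radial operators have spectrum $[0,\infty)$, and the angular terms $\frac{(n\mp m)^2}{r^2}\ge0$ only couple the $a\pm b$ components through a decaying perturbation. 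More carefully, I would show directly that $\sigma_{\mathrm{ess}}(H_m)=[0,\infty)$ in two halves.

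(i) Inclusion $[0,\infty)\subset\sigma_{\mathrm{ess}}$: build Weyl sequences. Take $a=-b=\phi_k$ with $\phi_k(r)=\chi((r-R_k)/L_k)\,e^{i\sqrt\lambda r}\,r^{-1/2}$, real parts suitably, supported far out with $R_k,L_k\to\infty$. On such functions $a+b=0$, so the coupling contributes $(2f_n^2-1-f_n^2)=f_n^2-1\to0$, and the angular terms are $O(R_k^{-2})$. Thus $\|(H_m-\lambda)\phi_k\|/\|\phi_k\|\to0$ for every $\lambda\ge0$.

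(ii) Inclusion $\sigma_{\mathrm{ess}}\subset[0,\infty)$: write
\[
Q_m^{(n)}[a,b]=\int\Bigl(|a_r|^2+|b_r|^2+\tfrac{(n-m)^2}{r^2}a^2+\tfrac{(n+m)^2}{r^2}b^2+f_n^2(a+b)^2+(f_n^2-1)(a^2+b^2)\Bigr)r\,dr.
\]
The first five terms are nonnegative. The last term is a relatively form-compact perturbation, because $1-f_n^2=O(r^{-2})$ is bounded and decays at infinity; it is compact relative to the $H^1$ norm by Rellich on bounded sets plus smallness at infinity. By the Weyl theorem for forms, the essential spectrum is therefore contained in that of the nonnegative part, which is $\subset[0,\infty)$.

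Once $\sigma_{\mathrm{ess}}=[0,\infty)$, the spectrum in $(-\infty,0)$ is discrete, with eigenvalues of finite multiplicity accumulating only at $0$. The main obstacle is finiteness, i.e.\ that there is no accumulation at $0^-$. I expect to handle this by a Glazman/IMS localization. Split with a partition $\chi_R^2+\eta_R^2=1$. The inner piece lives on a bounded interval, where the form has compact resolvent and so contributes finitely many negative directions. On the outer piece, use the decomposition above: the negative term is $(f_n^2-1)(a^2+b^2)\approx -\frac{n^2}{r^2}(a^2+b^2)$, whose magnitude is controlled by the angular barriers only when $(n\mp m)^2$ is large, which fails for $m=n$. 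Instead I would exploit $f_n^2(a+b)^2$. In variables $s=a+b$ and $d=a-b$, the $s$ component carries the mass $2f_n^2\approx2$, hence is coercive far out. For the $d$ component, the effective potential is
\[
\tfrac{(n-m)^2+(n+m)^2}{2r^2}-\tfrac{n^2}{r^2}=\tfrac{m^2}{r^2},
\]
plus cross terms $\frac{2nm}{r^2}sd$ that can be absorbed by the mass of $s$ at cost $O(r^{-4})d^2$. Combined with the radial Hardy inequality (the operator $-\partial_r^2-\frac1r\partial_r+\frac{m^2}{r^2}-Cr^{-4}$ with $m\ge1$ is nonnegative for large $r$), the outer form is nonnegative for $R$ large, plus IMS errors $O(R^{-2})$ supported where the inner piece absorbs them. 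This yields finitely many negative eigenvalues.

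Finally, \eqref{nunull} follows from the min-max principle. A maximal subspace on which $Q\le0$ has dimension equal to the number of eigenvalues $\le0$ below the essential spectrum, counted as negative eigenvalues plus $L^2$ kernel. Since the essential spectrum starts at $0$, any nonpositive subspace of larger dimension would force the spectral projection onto $(-\infty,0]$ to be infinite rank. That is excluded once we know the point spectrum at $0$ has finite multiplicity (from ODE theory, the kernel of a $2\times2$ radial system is at most $2$-dimensional in $L^2$) and the negative part is finite.
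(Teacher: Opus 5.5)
\textbf{Overall.} Your architecture matches the paper's. You use Weyl's theorem for $\sigma_{\rm ess}=[0,\infty)$. You show the form is nonnegative for $r\ge R$ after passing to $a\pm b$: the massive channel absorbs the $O(r^{-2})$ coupling, and the massless one keeps $m^2/r^2+O(r^{-4})$. Then compactness on a bounded interval and the spectral theorem give the $\nu$ identity.

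Your treatment of the essential spectrum is cleaner than the paper's. You keep the centrifugal terms inside the nonnegative reference form and treat only $(f_n^2-1)(a^2+b^2)$ as the form-compact perturbation. The paper instead compares with $-\partial_r^2-r^{-1}\partial_r+2$ and $-\partial_r^2-r^{-1}\partial_r$, which leaves the unbounded $r^{-2}$ terms at the origin unaccounted for. The explicit Weyl sequences and the remark that the $L^2$ kernel is at most two-dimensional are also welcome.

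\textbf{The gap: the IMS localization step fails as written.} With $\chi_R^2+\eta_R^2=1$ you get
$Q[u]=Q[\chi_Ru]+Q[\eta_Ru]-\int W|u|^2\,r\,dr$, where $W=|\chi_R'|^2+|\eta_R'|^2$. Since $|u|^2=|\chi_Ru|^2+|\eta_Ru|^2$, part of this error necessarily lands on the outer piece. The inner piece cannot absorb all of it, because that would need $W/\chi_R^2$ bounded, which fails where $\chi_R\to0$.

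On the outer piece, your only margin is the pointwise potential $m^2/r^2$ in the massless channel, which is of the same order as $W$. Suppose the cutoff switches over $[R,2R]$, and write $\chi_R=\cos\phi$, $\eta_R=\sin\phi$, so $W=\phi'^2$. A pointwise bound $W\le m^2/r^2$ would force $\pi/2=\int_R^{2R}\phi'\,dr\le m\log 2$, which is impossible for $m=1,2$. Enlarging $R$ does not help, since both sides scale like $R^{-2}$.

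\textbf{How to fix it.} Either option works:
\begin{itemize}
\item Use a logarithmic cutoff, e.g.\ $\phi=\frac{\pi}{2}\frac{\log(r/R)}{\log K}$ on $[R,KR]$. This gives $W=\frac{\pi^2}{4r^2\log^2K}\le\frac{m^2}{4r^2}$ once $\log K\ge \pi/m$.
\item Drop IMS altogether, as the paper does. In one variable you can cut the form at the single point $r=R$ with no error term: $H^1(0,\infty)\subset H^1(0,R)\oplus H^1(R,\infty)$ gives $Q\ge Q^{N}_{(0,R)}\oplus Q^{N}_{(R,\infty)}$. Free (Neumann) conditions at $R$ are the ones that bound the negative count from above. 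The exterior piece is nonnegative by the pointwise matrix estimate, and the interior piece has compact resolvent.
\end{itemize}
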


\begin{proof}
Use the constant orthogonal variables
$A=(a+b)/\sqrt2$ and $P=(a-b)/\sqrt2$.  From
\eqref{asym}, the potential matrix at infinity
has the form
\begin{equation*}
 \begin{pmatrix}2&0\\0&0\end{pmatrix}
 +\frac1{r^2}
 \begin{pmatrix}m^2-2n^2&-2nm\\-2nm&m^2\end{pmatrix}
 +O(r^{-4}),
\end{equation*}
up to interchanging the labels $A,P$. 
Since multiplication by a
bounded matrix-valued function converging to zero at infinity is
form-compact relative to the direct sum of the radial operators
\[
-\partial_r^2-r^{-1}\partial_r+2,
\qquad
-\partial_r^2-r^{-1}\partial_r
\]
acting in $L^2((0,\infty),r\,dr)$, Weyl's theorem implies that the
essential spectrum is
\[
\sigma_{\rm ess}(Q_m^{(n)})=[0,\infty).
\]
To prove that the negative spectrum is finite, it is enough to show
that the quadratic form is nonnegative outside a compact interval.
Indeed, for $r\ge R$, we estimate the coupling term using
$2ab\le a^2+b^2$:
\[
\frac{4|nm|}{r^2}|AP|
\le
A^2+\frac{4n^2m^2}{r^4}P^2.
\]
Choosing $R$ sufficiently large, we have
\[
\frac{m^2}{r^2}
-\frac{4n^2m^2}{r^4}
=
\frac{m^2}{r^2}
\left(1-\frac{4n^2}{r^2}\right)
\ge
\frac{m^2}{2r^2},
\]
while the massive coefficient remains bounded below by $1$. Hence the
quadratic form is nonnegative on $(R,\infty)$.

Splitting the form into the interior and exterior regions, the exterior
contributes no negative spectrum. On the bounded interval $(0,R)$, the
regular matrix Sturm--Liouville operator (for instance with Dirichlet
boundary condition at $r=R$) has compact resolvent, and therefore only
finitely many negative eigenvalues. By the minimax principle, the full
operator likewise has only finitely many negative eigenvalues.

Since the negative spectrum consists only of isolated eigenvalues of
finite multiplicity, the spectral theorem yields
\[
\nu(Q_m^{(n)})
=
\ind Q_m^{(n)}
+
\dim\ker_{L^2}Q_m^{(n)},
\]
which is precisely \eqref{nunull}.
\end{proof}

We shall also analyze the asymptotic behavior of the solutions near the origin and infinity.  The argument here follows the analysis carried out for the degree-one
vortex in \cite[Section~3.3, Propositions~3.1 and~3.2]{PacardRiviere}. The difference  is that we give the estimates for general degree $n$.

Near the origin,
\[
 f_n(r)=\kappa_n r^n
 \left(1-\frac{r^2}{4(n+1)}+O(r^4)\right),
 \qquad f_n^2(r)=\kappa_n^2r^{2n}(1+O(r^2)).
\]
The term involving $b$ in the equation for $a$ is $f_n^2b$, and the
term involving $a$ in the equation for $b$ is $f_n^2a$.  Keeping only
the dominant terms near the origin gives
\[
 -a''-\frac1r a'+\frac{(n-m)^2}{r^2}a=0,
 \qquad
 -b''-\frac1r b'+\frac{(n+m)^2}{r^2}b=0.
\]
If $m\ne n$, the first equation has the two solutions
$r^{|n-m|}$ and $r^{-|n-m|}$; if $m=n$, they are $1$ and $\log r$.
The two solutions of the second equation are $r^{n+m}$ and
$r^{-(n+m)}$.  Regularity at the origin excludes the singular solutions
$r^{-|n-m|}$, $\log r$, and $r^{-(n+m)}$.  Therefore
\begin{equation}\label{orgb}
 a(r)=O(r^{|n-m|}),\qquad b(r)=O(r^{n+m})\qquad (r\to0).
\end{equation}

At infinity, set $A=a+b$ and $P=a-b$.  Since
$f_n^2=1-n^2r^{-2}+O(r^{-4})$, their equations take the asymptotic form
\begin{align*}
 -A''-\frac1rA'+\bigl(2+O(r^{-2})\bigr)A
 -\frac{2nm}{r^2}P&=0,\\
 -P''-\frac1rP'+\left(\frac{m^2}{r^2}+O(r^{-4})\right)P
 -\frac{2nm}{r^2}A&=0.
\end{align*}
For $m\geq2$, 
with the following behavior:
\begin{align*}
 P_1&=r^m(1+O(r^{-2})),
 &A_1&=nm\,r^{m-2}(1+O(r^{-2})),\\
 P_2&=r^{-m}(1+O(r^{-2})),
 &A_2&=nm\,r^{-m-2}(1+O(r^{-2})),\\
 A_3&=r^{-1/2}e^{\sqrt2r}(1+O(r^{-1})),
 &P_3&=O(r^{-2}A_3),\\
 A_4&=r^{-1/2}e^{-\sqrt2r}(1+O(r^{-1})),
 &P_4&=O(r^{-2}A_4).
\end{align*}
Every solution is a linear combination of these four solutions.  If
$a$ and $b$ are bounded, then $P=a-b$ and $A=a+b$ are bounded. Hence
the coefficients of $(A_1,P_1)$ and $(A_3,P_3)$ vanish.  It follows
that
\begin{equation}\label{bdecay}
 P=O(r^{-m}),\qquad
 A=O(r^{-m-2})+O(r^{-1/2}e^{-\sqrt2 r})\qquad (r\to\infty).
\end{equation}
Differentiating the equations, or equivalently the asymptotic
expansions above, yields
\[
 a'(r)=
 \begin{cases}
  O(r^{|n-m|-1}),&m\ne n,\\
  O(r),&m=n,
 \end{cases}
 \qquad b'(r)=O(r^{n+m-1})\qquad (r\to0),
\]
and
\[
 P'=O(r^{-m-1}),\qquad
 A'=O(r^{-m-3})+O(r^{-1/2}e^{-\sqrt2 r})
 \qquad (r\to\infty).
\]
Consequently, for every bounded solution that is smooth at the origin
and has $m\geq2$,
\[
 \int_0^\infty\bigl(a^2+b^2+(a')^2+(b')^2\bigr)r\,\dd r<\infty.
\]

\bigskip
One of the main purposes in this section is to establish suitable lower and upper bounds for the standard vortex solutions. The reason we want to do this is the following. The linearized operator is associated to the vortex solution which does not have explicit formulas. This means that we actually don't have much information on this operator. In general, an operator can have kernels if there are no ``symmetry'' properties are imposed on it. For the degree one vortex solution, we already know that it is a minimizer of the energy functional. This is exactly the symmetry property used to prove its nondegeneracy. To prove the nondegeneracy of higher degree vortex solution, then it seems to be necessary to obtain precise bounds. The rest of this section is devoted to this aim.

The next lemma provides the comparison principle that underlies the construction of explicit algebraic barriers for $f_n$. Once an approximate profile satisfies a suitable differential inequality and the correct asymptotic ordering at infinity, it becomes a genuine global upper or lower bound. For this purpose, for a
positive function $g$, let us define
\[
 \mathcal E_n(g):=g''+\frac1r g'-\frac{n^2}{r^2}g+(1-g^2)g.
\]

\begin{lemma}\label{l:ratio}
Assume that $g(r)=\gamma r^n(1+O(r^2))$ near zero, with $\gamma>0$,
and that $g(r)\to1$ as $r\to\infty$.  If
$\mathcal E_n(g)\leq0$ and $f_n/g<1$ near infinity, then $f_n<g$ on
$(0,\infty)$.  If $\mathcal E_n(g)\geq0$ and $f_n/g>1$ near infinity,
then $f_n>g$ on $(0,\infty)$.  
\end{lemma}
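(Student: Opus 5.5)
Consider the ratio $w=f_n/g$, which is positive on $(0,\infty)$. We treat the supersolution case ($\mathcal E_n(g)\le0$, $w<1$ near infinity); the subsolution case is symmetric, with all inequalities reversed.

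\emph{Step 1: the equation for $w$.} Substituting $f_n=wg$ into the equation of $f_n$ and using the definition of $\mathcal E_n(g)$ gives
\[
 g\Bigl(w''+\tfrac1r w'\Bigr)+2g'w'+w\,\mathcal E_n(g)+w g^3(1-w^2)=0 .
\]
Multiplying by $rg$, the first three derivative terms combine into $(rg^2w')'$. We obtain
\[
 (rg^2w')'=-rg\,w\,\mathcal E_n(g)-rg^4w(w^2-1)\cdot(-1),
\]
that is,
\[
 (rg^2w')'=-rgw\,\mathcal E_n(g)+rg^4w(w^2-1).
\]

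\emph{Step 2: behaviour at the endpoints.} Near $0$ we have $w\to\kappa_n/\gamma$, and $w$ is smooth in $r^2$, so $rg^2w'=O(r^{2n+2})\to0$. Near infinity $w\to1$ and $w<1$ by hypothesis.

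\emph{Step 3: maximum principle.} Suppose $\sup_{(0,\infty)}w\ge1$. Since $w<1$ near infinity, the supremum is attained at some $r_0\in[0,R]$, with $w(r_0)\ge1$ (at $r_0=0$ we read $w(0)=\kappa_n/\gamma$).

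If $r_0>0$ is an interior maximum with $w(r_0)>1$, then $w'(r_0)=0$ and $w''(r_0)\le0$. On the other hand, $(rg^2w')'(r_0)=r_0g^2w''(r_0)$, while the right-hand side is $\ge r_0g^4w(w^2-1)>0$. This is a contradiction.

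The borderline case $w(r_0)=1$ needs a strong maximum principle. Set $z=1-w\ge0$. It satisfies a linear inequality
\[
 (rg^2z')'-c(r)z\le0, \qquad c\ge0 \text{ locally bounded,}
\]
where the term $w(w^2-1)$ has been written as $-w(w+1)z$ and $\mathcal E_n(g)\le0$ has been used. By Hopf's lemma, $z$ either vanishes identically or is positive. The first alternative is excluded because $w<1$ near infinity.

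\emph{Step 4: the origin.} This is the main obstacle, since the operator degenerates at $r=0$. Integrating the identity from $0$ to $r$ and using $rg^2w'\to0$, we get
\[
 r g^2 w'(r)=\int_0^r \bigl[-sgw\,\mathcal E_n(g)+sg^4w(w^2-1)\bigr]\dd s .
\]
If $w(0)\ge1$ and $w$ attains its maximum there, then for small $r$ with $w\ge1$ on $(0,r)$ the integrand is $\ge0$. Hence $w'\ge0$, and $w$ is nondecreasing near $0$.

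If $w(0)>1$, the integrand is strictly positive, so $w$ strictly increases. This contradicts maximality at $0$.

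If $w(0)=1$, then the set where $w\ge1$ is handled again via the linear inequality for $z$. Alternatively, one argues by contradiction with the first point where $w$ exceeds $1$: integrating from $0$ shows that $w'$ cannot become positive without $w$ first exceeding $1$. Combined with the strong maximum principle on $(0,\infty)$, this forces $z>0$ everywhere.

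Therefore $w<1$ on $(0,\infty)$, i.e.\ $f_n<g$. The reversed inequalities give $f_n>g$ in the case $\mathcal E_n(g)\ge0$, $w>1$ near infinity.
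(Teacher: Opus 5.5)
Your proof is correct and is essentially the paper's argument. Both use the divergence-form identity $(rg^2w')'=-rgw\,\mathcal E_n(g)+rg^4w(w^2-1)$ plus a maximum principle, with the strong maximum principle handling the touching case; the paper uses strict monotonicity of $rg^2w'$ on each component of $\{w>1\}$, which treats the origin and interior points uniformly, while you split into cases at a maximum point.

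One small repair is needed. Because $w\to1$ at infinity, $\sup_{(0,\infty)}w\ge1$ always holds and need not be attained, so your contradiction hypothesis should instead be that $w(r_1)\ge1$ for some $r_1>0$. The maximum of $w$ on $[0,\max(R,r_1)]$ is then a global maximum of value at least $1$, and your cases go through. In particular, if that maximum sits at $r=0$ with value $1$, then $w\le1$ on $(0,\infty)$ and $w(r_1)=1$, so the strong maximum principle applies directly.
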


\begin{proof}
Write $f_n=zg$.  Subtracting the equations for $f_n$ and $g$ gives
\begin{equation}\label{ratio}
 (rg^2z')'=rg^2z\left[g^2(z^2-1)-\frac{\mathcal E_n(g)}g\right].
\end{equation}
Since both $f_n$ and $g$ satisfy
\[
f_n(r),\,g(r)=\gamma r^n(1+O(r^2)),
\]
the ratio $z=f_n/g$ extends continuously to $r=0$,
and
\[
rg^2z'(r)\to0
\qquad(r\to0).
\]
Suppose
first that $\mathcal E_n(g)\leq0$.  On a component of $\{z>1\}$ the
right side of \eqref{ratio} is positive.  
At the left endpoint of such a component,
\[
z=1,\qquad z'\ge0,
\]
while, if the component starts at the origin, we have
\[
rg^2z'=0.
\]
Indeed, since $(rg^2z')'>0$ throughout the component,
the quantity $rg^2z'$ is strictly increasing.
Hence it remains positive, implying $z'>0$ everywhere on the
component. Consequently $z$ is strictly increasing and cannot
return to the value $1$ at its right endpoint.
  This contradicts $z<1$ near infinity.
Thus $z\le1$ throughout $(0,\infty)$. Since $z\not\equiv1$
(otherwise $\mathcal E_n(g)\equiv0$ and $g=f_n$), the strong
maximum principle applied to \eqref{ratio} yields
\[
z<1
\quad\text{on }(0,\infty).
\]  The second assertion
follows in the same way from a component of $\{z<1\}$.
\end{proof}

To simplify notations, let us now define
\begin{equation}\label{g3}
 g_C(r)=\frac{r^3}{\sqrt{r^6+9r^4+99r^2+C}},
 \qquad C>0.
\end{equation}
The main result of this section is the following
\begin{proposition}\label{p:bd3}
For every $r>0$, there holds $$
 \frac{r^2}{\sqrt{r^4+4r^2+30+6\sqrt{21}}}
 < f_2(r) <
 \frac{r^2}{\sqrt{r^4+4r^2+24}}.
$$
We also have \begin{equation}\label{bd3}
 g_{4000}(r)<f_3(r) <g_C(r),\quad \text{for} \quad 0<C<792.
\end{equation}
\end{proposition}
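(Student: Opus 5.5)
The plan is to apply Lemma~\ref{l:ratio} to each of the four explicit barriers. For each barrier this requires three checks: the behaviour near the origin, the sign of $\mathcal E_n(g)$ on $(0,\infty)$, and the ordering of $f_n/g$ against $1$ near infinity.

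\textbf{Behaviour near the origin.} Every barrier has the form $g=r^n(p(r^2))^{-1/2}$ with $p$ a polynomial and $p(0)>0$. Hence $g=\gamma r^n(1+O(r^2))$ with $\gamma=p(0)^{-1/2}>0$, and $g\to1$ at infinity. The behaviour at $0$ needed in the lemma therefore holds automatically. One point needs attention: the proof of Lemma~\ref{l:ratio} uses that $z=f_n/g$ is continuous at $0$ and that $rg^2z'\to0$. Both hold for any $\gamma,\kappa_n>0$, since $z\to\kappa_n/\gamma$ and $z'=O(r)$.

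\textbf{Sign of $\mathcal E_n(g)$.} Write $s=r^2$ and $g=r^n p^{-1/2}$. A direct computation gives
$$
\mathcal E_n(g)=\frac{r^{n}}{4\,s\,p^{7/2}}\,N(s),
$$
where $N(s)$ is a polynomial in $s$ with coefficients polynomial in the constant $C$. (The powers may need adjusting after the computation; the point is that everything reduces to a polynomial sign condition.) I will compute $N$ explicitly for the four cases:
\begin{itemize}
\item[] $p=s^2+4s+24$ and $p=s^2+4s+D$ with $D=30+6\sqrt{21}$, for $n=2$;
\item[] $p=s^3+9s^2+99s+C$, for $n=3$.
\end{itemize}
The middle coefficients $4$ and $9,99$ are forced by \eqref{asym}. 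Matching $f_n$ at infinity gives $g=1-\frac{n^2}{2s}-\cdots$, and these coefficients make the first nontrivial terms of $N$ cancel, so that the sign of $N$ is governed by the free constant.

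Then I will show that $N\le0$ for all $s>0$ in the upper-barrier cases ($D=24$, and $0<C<792$), and $N\ge0$ in the lower-barrier cases ($D=30+6\sqrt{21}$, and $C=4000$). For the quadratic family I expect $N(s)$ to have low degree. The special value $30+6\sqrt{21}$ is presumably the threshold at which a discriminant of a factor of $N$ vanishes, which would explain the surd. The cubic family gives a higher-degree polynomial. There I will show positivity or negativity by checking that all coefficients have one sign, or by grouping terms (e.g.\ AM--GM between adjacent monomials). The dependence on $C$ is affine or quadratic, so the upper-barrier claim for all $C\in(0,792)$ reduces to checking the endpoints when the dependence is affine.

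\textbf{Ordering at infinity.} This step compares expansions. For $n=2$, \eqref{asym} gives
$$
f_2=1-\frac{2}{s}-\frac{6}{s^2}-\frac{23}{s^3}+\cdots,
$$
while
$$
(1+4/s+D/s^2)^{-1/2}=1-\frac2s+\frac{6-D/2}{s^2}+\cdots.
$$
The $s^{-2}$ coefficients agree exactly when $D=24$. So for the upper bound $D=24$ the comparison moves to the $s^{-3}$ term, where $-23$ must be compared with the barrier's coefficient; I expect strict inequality in the right direction. For $D=30+6\sqrt{21}>24$ the barrier is already smaller at order $s^{-2}$.

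For $n=3$, the expansion $f_3=1-\frac{9}{2s}-\frac{153}{8s^2}-\cdots$ matches $g_C$ through order $s^{-2}$ by the choice of $9,99$. The sign of $f_3-g_C$ is then decided at order $s^{-3}$, and it is linear in $C$. I expect the threshold value to be $792$: for $C<792$ the barrier lies above $f_3$, which is where the condition $0<C<792$ comes from. For $C=4000$ the barrier lies below $f_3$.

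\textbf{Main obstacle.} I expect the hard step to be the global sign of the polynomial $N$ for $n=3$. The coefficients are large, and the sign can fail in the transition region $s\sim 1$--$10$ even though it is correct at both ends. If a naive coefficient check fails, I will locate the critical region numerically and prove the sign there by a Sturm-sequence argument, or by splitting $(0,\infty)$ into finitely many intervals and bounding $N$ from below by interval arithmetic on each.
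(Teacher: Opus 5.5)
Your plan is correct and is the paper's own route. You apply Lemma~\ref{l:ratio} to each barrier, reduce $\mathcal E_n(g)/g$ to an explicit polynomial, and read off the ordering at infinity from \eqref{asym}: for $n=2$ the polynomial is the quadratic $(C-24)(x^2+C)-8(C+6)x$ in $x=r^2$, whose discriminant vanishes at $C=30+6\sqrt{21}$, and for $n=3$ it is the cubic $B_C$ of \eqref{bpoly}, with all coefficients negative for $0<C<792$ and $B_{4000}>0$ by a one-line AM--GM bound. There are two harmless slips: the $r^{-6}$ coefficient of $f_2$ is $-68$, not $-23$ (still below the barrier's $3\cdot 24-20=52$), and the restriction $C<792$comes from the constant term $C(C-792)$ of $B_C$, i.e.\ from the sign of $\mathcal E_3(g_C)$ near the origin, whereas the ordering of $g_C$ and $f_3$ at infinity only switches at $C=1440$.
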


\begin{proof}
Since the proof for $f_2$ and $f_3$ is similar, 
we focus our main attention on the case of $f_3$. For $f_2$, we simply use the fact that $$\frac{\mathcal E_2(\phi_C)}{\phi_C}
 =
 \frac{(C-24)(r^4+C)-8(C+6)r^2}
 {(r^4+4 r^2+C)^2},$$
where $$\phi_C(r)=\frac{r^2}{\sqrt{r^4+4r^2+C}},\qquad C>0.$$ 

Now in the case of $f_3$, for the function in \eqref{g3}, put
$x=r^2$ and $D_C(x)=x^3+9x^2+99x+C$.  Direct substitution gives
\begin{equation*}
 \frac{\mathcal E_3(g_C)}{g_C}=\frac{B_C(x)}{D_C(x)^2},
\end{equation*}
where
\begin{equation}\label{bpoly}
 B_C(x)=(C-1440)x^3-(18C+4455)x^2
 +(18C-49005)x+C(C-792).
\end{equation}
When $0<C<792$, every coefficient in \eqref{bpoly} is negative, so
$\mathcal E_3(g_C)<0$.  For $C=4000$,
\[
 B_{4000}(x)=2560x^3-76455x^2+22995x+12832000.
\]
For $x\geq0$,
\[
 2560x^3-76455x^2
 \geq-\frac{4\cdot76455^3}{27\cdot2560^2}.
\]
A direct computation gives
\[
 12832000\cdot27\cdot2560^2-4\cdot76455^3
 =482956326814500>0
\]
therefore shows that $B_{4000}(x)>0$.

The asymptotic expansions needed to apply Lemma~\ref{l:ratio} are
\begin{align*}
 f_3(r)&=1-\frac9{2r^2}-\frac{153}{8r^4}
 -\frac{4473}{16r^6}+O(r^{-8}),\\
 g_C(r)&=1-\frac9{2r^2}-\frac{153}{8r^4}
 +\left(\frac{7047}{16}-\frac C2\right)r^{-6}+O(r^{-8}).
\end{align*}
For $C<792$, the coefficient of $r^{-6}$ in the expansion of $g_C$
is strictly larger than that of $f_3$, and hence
\[
g_C(r)-f_3(r)>0
\]
for all sufficiently large $r$. Equivalently,
\[
\frac{f_3}{g_C}<1.
\]
Similarly, when $C=4000$,
\[
g_{4000}(r)-f_3(r)<0,
\]
so that
\[
\frac{f_3}{g_{4000}}>1
\]
for sufficiently large $r$.
 Hence $f_3/g_C<1$ in the
first case and $f_3/g_{4000}>1$ in the second case for all sufficiently
large $r$.  Lemma~\ref{l:ratio} completes the proof.
\end{proof}

We remark that for complex sine-Gordon II equation, the polynomial is part of an exact solution generated by
the integrable B\"acklund--Schlesinger hierarchy.  In the polynomial
family under consideration, the degree-\(n\) solution involves a
polynomial of degree \(2n^2\) in \(r\), or degree \(n^2\) in
\(x=r^2\).  Thus the degree-one and degree-two solutions involve
degrees \(2\) and \(8\) in \(r\), respectively.

For Ginzburg-Landau equation, the functions used in our estimates are not exact
polynomial solutions.  They are comparison barriers of the form
\[
g_n(r)=\frac{r^n}{\sqrt{D_n(r^2)}}.
\]
Their degree is determined only by the required behavior at
\(r=0\) and \(r=\infty\). In particular, it is natural to choose $D_n$ with degree $n$. It is possible to obtain a general explicit formula for the upper and lower bound of $f_n$, but we will not pursue this here, since the analysis of Morse index and nondegeneracy requires quite precise estimate of $f_n$, but a general closed form expression of the lower and upper bound may not be sufficient for this purpose. It is also worth pointing out that for the simplest $n=1$ case, we can actually prove 
\[
\frac{r}{\sqrt{r^2+4}}
<f_1(r)<
\frac{r}{\sqrt{r^2+1}},
\qquad r>0.
\] It is also interesting to note that the left hand side is the precise form of the solution corresponding to the complex sine-Gordon II equation.

\section{Nondegeneracy and Morse index}
\label{s:proof}

In this section, we analyze the Fourier blocks of the linearized operator and prove the nondegeneracy, compute the Morse index. For degree-one vortex solution, the classical stability and nondegeneracy result is proved in \cite{Mironescu,PacardRiviere}, see also \cite{delPinoFelmerKowalczyk}.

To better understand the differences between degree one vortex solution and higher degree ones, let us now explain the main steps of the proof. We will analyze the linearized operator according to different Fourier modes. The mode $m=0$ is exceptional because the phase and
amplitude variables decouple, and the phase symmetry produces the only zero
mode in that block, this step is essentially same as the degree one vortex solution.  The mode $m=1$ contains the two translation fields and
admits an exact ground-state decomposition. Comparing a general block with this reference gives
\eqref{high}: the comparison is strictly positive for $m\geq2n-1$ (including
the endpoint $m=2n-1$), so all such blocks are positive, whereas for
$2\leq m\leq2n-2$ one centrifugal coefficient has the wrong sign and the
comparison alone is inconclusive.  These intermediate modes are consequently
the only ones in which negative spectrum or an additional kernel can occur.
For them, we replace $f_n$ by an explicit lower barrier $g$, use the logarithmic
variable $t=\log r$, and complete the square in one component of the resulting
two-component form.  Dropping that nonnegative square yields the scalar
comparison form $\ell_m$ in \eqref{slow}; non-negative supersolutions for its
Schr\"odinger operator bound the dimension of the nonpositive subspace, while
explicit negative test functions give the matching lower bounds for the Morse
index.

We now analyze these Fourier blocks in details. As we mentioned above, the modes \(m=0\) and \(m=1\) correspond to the symmetries of the equation and therefore should produce the geometric kernel. The remaining modes require a completely different argument.

Let us first of all treat the case of $m=0$, this is more or less standard. We can write $v=e^{in\theta}(\alpha+i\beta)$.
Applying the orthogonal change of variables
\[
\alpha=\frac{a+b}{\sqrt2},
\qquad
\beta=\frac{a-b}{\sqrt2},
\]
we obtain
\[
Q_0^{(n)}[a,b]=Q_0^+[\alpha]+Q_0^-[\beta],
\]
where
\begin{align*}
 Q_0^-[\beta]&=\int_0^\infty\left\{\beta_r^2+
 \left(\frac{n^2}{r^2}+f_n^2-1\right)\beta^2\right\}r\dd r,\\
 Q_0^+[\alpha]&=Q_0^-[\alpha]+2\int_0^\infty f_n^2\alpha^2r\dd r.
\end{align*} 
The differential operator associated with $Q_0^-$ annihilates the
positive function $f_n$. Consequently, the ground-state
representation gives
\begin{equation}\label{phase}
 Q_0^-[\beta]
 =
 \int_0^\infty f_n^2
 \left|
 \left(\frac{\beta}{f_n}\right)'
 \right|^2r\,\dd r.
\end{equation}
Indeed, for $\beta$ compactly supported in $(0,\infty)$, one expands
the square on $(\varepsilon,R)$ and uses the profile equation
\[
 f_n''+\frac1r f_n'-\frac{n^2}{r^2}f_n+(1-f_n^2)f_n=0
\]
to integrate the cross term. The regular behavior of $f_n$ and
$\beta$ at the origin, together with a cutoff at infinity, eliminates
the boundary terms. The identity then extends to the full form domain
by closure.

In particular, $Q_0^-\geq0$. The general solution of the corresponding
homogeneous equation is a linear combination of
\[
 f_n(r)
 \qquad\text{and}\qquad
 f_n(r)\int_c^r\frac{\dd s}{s f_n(s)^2}.
\]
The second solution behaves like $r^{-n}$ near the origin and like
$\log r$ at infinity. It is therefore singular at the origin and
unbounded at infinity. Hence the space of bounded solutions that are
regular at the origin is generated by
\(
 \mathbb R f_n,
\)
corresponding in the plane to the phase Jacobi field
\(
  i u_n.
\)
Finally, from \eqref{phase},
\[
 Q_0^+[\alpha]
 =
 \int_0^\infty f_n^2
 \left|
 \left(\frac{\alpha}{f_n}\right)'
 \right|^2r\,\dd r
 +
 2\int_0^\infty f_n^2\alpha^2r\,\dd r,
\]
and hence $Q_0^+[\alpha]>0$ for every nonzero $\alpha$ in its form
domain. Moreover, every bounded solution of the amplitude equation
that is regular at the origin decays exponentially at infinity and
therefore belongs to the form domain. Thus the amplitude block has no
nontrivial bounded kernel.

Next we treat the case of $m=1$. Set
\begin{equation}\label{pq}
 q(r)=f_n'+\frac{nf_n}{r},\qquad p(r)=\frac{nf_n}{r}-f_n'.
\end{equation}
Both functions are strictly positive. The positivity of $q$ follows
immediately from the positivity and monotonicity of $f_n$. To prove that
$p>0$, set
\[
 F(t)=f_n(e^t),
 \qquad
 s(t)=\frac{\dot F(t)}{F(t)}.
\]
The profile equation gives
\[
 \dot s
 =
 n^2-e^{2t}(1-F^2)-s^2.
\]
Moreover, the expansion of $f_n$ at the origin yields
\[
 s(t)
 =
 n-\frac{e^{2t}}{2(n+1)}+O(e^{4t})
 \qquad\text{as }t\to-\infty.
\]
Hence $s<n$ for all sufficiently negative $t$. If $s$ had a first
contact with $n$ from below, then at the contact point one would have
$\dot s\geq0$. On the other hand, the differential equation gives
\[
 \dot s=-e^{2t}(1-F^2)<0,
\]
since $0<F<1$. This is a contradiction. Therefore $s(t)<n$ for every
$t$, and hence
\[
 p(r)=\frac{f_n(r)}{r}\bigl(n-s(\log r)\bigr)>0.
\]
Differentiating $u_n$ with respect to $x_1$ gives
\begin{equation}\label{trans}
 2\partial_{x_1}u_n
 =q(r)e^{i(n-1)\theta}-p(r)e^{i(n+1)\theta}.
\end{equation}
Consequently, by \eqref{trans}, $q$ and $p$ solve
\begin{align}
 -q''-r^{-1}q'+\left((n-1)^2r^{-2}+2f_n^2-1\right)q-f_n^2p&=0,
 \label{qeq}\\
 -p''-r^{-1}p'+\left((n+1)^2r^{-2}+2f_n^2-1\right)p-f_n^2q&=0.
 \label{peq}
\end{align}
Applying the scalar ground-state representation separately to the
$q$- and $p$-diagonal terms in $Q_1^{(n)}[a,-c]$, and using
\eqref{qeq}--\eqref{peq}, we obtain
\begin{equation}\label{tground}
\begin{split}
 Q_1^{(n)}[a,-c]=\int_0^\infty\bigg\{&q^2
 \left|\left(\frac aq\right)'\right|^2+p^2
 \left|\left(\frac cp\right)'\right|^2\\
 &+f_n^2pq\left(\frac aq-\frac cp\right)^2\bigg\}r\dd r.
\end{split}
\end{equation}
Indeed, the two diagonal remainder terms are
\[
 f_n^2\frac pq\,a^2
 \qquad\text{and}\qquad
 f_n^2\frac qp\,c^2,
\]
whereas the original off-diagonal term is $-2f_n^2ac$. Their sum is
precisely
\[
 f_n^2pq
 \left(\frac aq-\frac cp\right)^2.
\]
Identity \eqref{tground} shows that $Q_1^{(n)}\geq0$.
Moreover, equality
in the form domain forces
\[
 \left(\frac aq\right)'=0,
 \qquad
 \left(\frac cp\right)'=0,
 \qquad
 \frac aq=\frac cp,
\]
and therefore
\[
 (a,-c)=\lambda(q,-p)
\]
for some $\lambda\in\mathbb R$.

The same conclusion holds for bounded solutions that are regular at the
origin. Indeed, applying \eqref{tground} to suitable cutoffs of such a
solution and using its asymptotic behavior at zero and infinity shows
that the cutoff errors tend to zero. Hence the bounded radial kernel of
the $m=1$ block is
\(
 \mathbb R(q,-p).
\)
The cosine and sine angular components of this radial mode correspond
to the two translational Jacobi fields \(
 \partial_{x_1}u_n\)
 and \(\partial_{x_2}u_n.
\)

Having analyzed the symmetry modes, we now turn to the genuinely
unstable Fourier blocks. Identity (\ref{tground}) immediately implies positivity of sufficiently high modes through the comparison 
\begin{equation}\label{high}
 Q_m^{(n)}-Q_1^{(n)}=\int_0^\infty\left\{
 \frac{(m-1)(m-2n+1)}{r^2}a^2+
 \frac{(m-1)(m+2n+1)}{r^2}b^2\right\}r\dd r.
\end{equation}
Combining \eqref{high} with the ground-state identity
\eqref{tground}, we conclude that
\(
Q_m^{(n)}>0
\)
for every nonzero pair whenever \(m\ge2n-1\).  For $m>2n-1$ both added
coefficients are positive. When \(m=2n-1\), the coefficient of \(a^2\) vanishes, whereas the
coefficient of \(b^2\) remains strictly positive. Since the
translation mode satisfies \(p>0\), it cannot lie in the kernel of
\(Q_{2n-1}^{(n)}\). Hence the comparison is still strict.

The previous analysis tells that only
\begin{equation*}
 2\leq m\leq2n-2
\end{equation*}
 can contribute negative spectrum or an additional bounded kernel. These intermediate modes constitute the main difficulty of the proof.

Fix an explicit lower barrier $g=g_{n,C_L}<f_n$ and let $Q_{m,g}^{(n)}$ denote
\eqref{qm} with $f_n$ replaced by $g$.  Since
$g<f_n$,
\begin{equation}\label{forder}
 Q_m^{(n)}[a,b]-Q_{m,g}^{(n)}[a,b]
 =2\int_0^\infty(f_n^2-g^2)(a^2+ab+b^2)r\dd r>0
\end{equation}
for every nontrivial pair \((a,b)\).  
Since
\[
Q_m^{(n)}\ge Q_{m,g}^{(n)}
\]
with strict inequality on every nonzero element, the minimax
principle implies
\begin{equation}\label{nuord}
\nu(Q_m^{(n)})\leq\nu(Q_{m,g}^{(n)}).
\end{equation}
Passing to logarithmic variables
\[
t=\log r,\qquad x=e^{2t},
\]
and writing
\[
a=-d,\qquad b=c,
\]
the quadratic form becomes
\begin{equation}\label{e:model}
\begin{aligned}
 Q_{m,g}^{(n)}[c,d]
 &=\int_\R\left\{\dot c^2+\dot d^2 \right\} \, dt \\
 &+\int_\R \left\{
 ((n+m)^2-P)c^2+((n-m)^2-P)d^2+W(c-d)^2\right\}\dd t.
 \end{aligned}
\end{equation} 

We now introduce the effective potentials
\begin{equation}\label{av}
 A_m(t)=(n+m)^2-P,\qquad
 V_m(t)=(n-m)^2-P+\frac{A_mW}{A_m+W}.
\end{equation}
For every model used below, $P(t)<(n+2)^2$, hence $A_m(t)>0$.   
Observe that
$$
 A_mc^2+W(c-d)^2=(A_m+W)
 \left(c-\frac{W}{A_m+W}d\right)^2+\frac{A_mW}{A_m+W}d^2.
$$

Note that since \(A_m>0\), the \(c\)-component in the quadratic form is coercive and cannot by itself generate an unstable direction. For each fixed \(d\), completing the square in \(c\) amounts to minimizing the zeroth-order energy \(A_mc^2+W(c-d)^2\), yielding the effective contribution \(A_mW(A_m+W)^{-1}d^2\). The remaining square and the kinetic term \(\dot c^{,2}\) are nonnegative and may therefore be discarded. 

Consequently, the two-component problem is reduced to the scalar Schrödinger form $\ell_m$, which will be analyzed in the remainder of the proof.

This reduces the coupled two-component problem to a scalar Sturm--Liouville form, to which oscillation and supersolution arguments can be applied. The following lemma makes this reduction precise.

\begin{lemma}
 There holds
\begin{equation}\label{slow}
 Q_{m,g}^{(n)}[c,d]\geq
 \ell_m[d]:=\int_\R(\dot d^2+V_md^2)\dd t.
\end{equation}   
\end{lemma}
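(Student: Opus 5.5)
The plan is to use only the algebraic identity displayed just before the statement, applied pointwise in $t$, together with the positivity $A_m>0$. Two preliminary points need checking first: the sign of $W$, and the fact that the denominator $A_m+W$ does not vanish. In the logarithmic variables, $P$ and $W$ come from rewriting the zeroth-order terms of $Q^{(n)}_{m,g}$. With $a=-d$ and $b=c$, and after multiplying by $r^2=e^{2t}$, the coefficients are
\[
(n+m)^2+x(2g^2-1)\ \text{on } c^2,\qquad (n-m)^2+x(2g^2-1)\ \text{on } d^2,\qquad -2xg^2\ \text{on } cd .
\]
Matching these with $((n+m)^2-P)c^2+((n-m)^2-P)d^2+W(c-d)^2$ gives
\[
W=xg^2\ge 0,\qquad P=x(1-g^2),
\]
with $x=e^{2t}$. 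Since $A_m>0$ and $W\ge0$, we get $A_m+W>0$, so $V_m$ is well defined and smooth.

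Next, for each fixed $t$, I apply the completion of the square stated before the lemma:
\[
A_mc^2+W(c-d)^2=(A_m+W)\Bigl(c-\tfrac{W}{A_m+W}d\Bigr)^2+\tfrac{A_mW}{A_m+W}d^2 .
\]
This is verified by expanding both sides. The cross term is $-2Wcd$ on each side. The $d^2$ coefficient on the right is $\frac{W^2+A_mW}{A_m+W}=W$, which matches the left. Substituting into \eqref{e:model} gives
\[
Q^{(n)}_{m,g}[c,d]=\int_\R\Bigl\{\dot c^2+(A_m+W)\Bigl(c-\tfrac{W}{A_m+W}d\Bigr)^2\Bigr\}\dd t+\int_\R\bigl(\dot d^2+V_md^2\bigr)\dd t .
\]
The first integral is nonnegative because $A_m+W>0$, and dropping it yields \eqref{slow}.

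The main obstacle is to make sure every integral involved is finite, so that discarding a nonnegative term is legitimate. For $(c,d)$ smooth and compactly supported in $\R$ there is nothing to check. For general pairs in the form domain, both sides are closed forms, and the inequality passes to the closure by density, using Fatou's lemma on the nonnegative discarded term. It also helps to note that $V_m$ is bounded below: since $0\le \frac{A_mW}{A_m+W}\le W$, we have $V_m\ge (n-m)^2-P$. Hence the right-hand side of \eqref{slow} is well defined on the same domain whenever the left-hand side is.
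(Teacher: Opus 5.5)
Your proof is correct and is essentially the paper's argument: complete the square pointwise in $c$ using $A_m>0$ and $W\ge0$, then discard the nonnegative term $\dot c^{2}+(A_m+W)\bigl(c-\frac{W}{A_m+W}d\bigr)^2$. The paper leaves implicit your explicit identification $W=xg^2$, $P=x(1-g^2)$ and your remarks on well-definedness and density; these are consistent with $W=x^{n+1}/D$ and $P=p_0/D$ as stated there.
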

We next rewrite the effective potential $V_m$ in a more convenient form. This representation reduces all subsequent positivity checks to explicit polynomial inequalities.

The two lower barriers have the same form
\[
 g(r)^2=\frac{x^n}{D(x)},\qquad x=r^2.
\]
Here $D$ is simply the denominator of the barrier: For $n=2$, by the result of the previous section,  we can use $g=\phi_{C_*},$  where
\[
 \phi_C(r)=\frac{r^2}{\sqrt{r^4+4r^2+C}},\qquad
 C_*=30+6\sqrt{21},\qquad D(x)=x^2+4x+C_*,
\]
whereas for $n=3$ we use $g=g_{4000}$ from \eqref{g3}, so
\[
 D(x)=x^3+9x^2+99x+4000.
\]
It follows at once that
\[
 W=\frac{x^{n+1}}{D(x)},\qquad P=\frac{p_0(x)}{D(x)} ,\qquad
 p_0(x):=x\bigl(D(x)-x^n\bigr).
\]
Thus \(p_0\) is precisely the numerator of \(P=x(1-g^2)\).  In the same way,
\[
 A_m=\frac{(n+m)^2D-p_0}{D},\qquad
 A_m+W=\frac{E_m}{D},
\]
where
\begin{equation}\label{spoly}
 E_m(x):=(n+m)^2D(x)-p_0(x)+x^{n+1}.
\end{equation}
Finally, define the numerator of $V_m$ by
\begin{equation}\label{npoly}
\begin{aligned}
 N_m(x)&:=\bigl((n-m)^2D(x)-p_0(x)\bigr)E_m(x)
 \\
 &+\bigl((n+m)^2D(x)-p_0(x)\bigr)x^{n+1}.
 \end{aligned}
\end{equation}
Substitution in \eqref{av} now gives the identity
\[
 V_m(t)=\frac{N_m(x)}{D(x)E_m(x)},\qquad x=e^{2t}.
\]
Since $\frac{s}{DE_2}>0$ for all $s\in(0,1)$, the sign of
\[
(-\partial_t^2+V_2)h_-
\]
is determined by the explicit polynomial \(N_m\).

The following proposition provides the key upper bound for \(\nu(Q_m^{(n)})\). Corresponding lower bounds will later be obtained by constructing explicit test functions for which the quadratic form is negative.

\medskip
 Before stating the proposition, let us briefly explain the underlying idea, which is closely related to the Sturm--Courant nodal principle.
 The points \(t_1,\ldots,t_k\) divide \(\mathbb R\) into \(k+1\) intervals. A positive supersolution on each interval, together with the ground-state identity, implies that \(\ell_m[\eta]>0\) whenever \(\eta\neq0\) vanishes at all the points \(t_j\). Since imposing the \(k\) conditions
\(
\eta(t_1)=\cdots=\eta(t_k)=0
\)
reduces the dimension by at most \(k\), the scalar form \(\ell_m\) can have at most \(k\) nonpositive directions.

The comparison
\(
Q_{m,g}^{(n)}[c,d]\ge \ell_m[d]
\)
allows the scalar estimate to be transferred to the original
two-component form. Indeed, the projection
\(
(c,d)\longmapsto d
\)
is injective on every nonpositive subspace of $Q_{m,g}^{(n)}$, since
\[
Q_{m,g}^{(n)}[c,0]
=
\int_{\mathbb R}
\bigl(\dot c^{\,2}+(A_m+W)c^2\bigr)\,dt>0
\]
whenever $c\not\equiv0$. Consequently, the dimension of any
nonpositive subspace of $Q_{m,g}^{(n)}$ is bounded by that of the
corresponding nonpositive subspace of $\ell_m$. Since each splitting
point contributes at most one nonpositive direction, we obtain
\[
\nu(Q_m^{(n)})\le k.
\]
We emphasize that this argument is intrinsically one-dimensional: it relies on the Sturm oscillation mechanism and has no direct analogue in higher dimensions.

\begin{proposition}\label{p:trace}
Suppose there are points $t_1<\cdots<t_k$ and, on every component of
$\R\setminus\{t_1,\ldots,t_k\}$, a positive function $h$ satisfying
$(-\partial_t^2+V_m)h>0$.  
Then the following implication holds for every
$\eta\in H^1_{\rm loc}(\R)$ with
$\ell_m[\eta]<\infty$:
\begin{equation}\label{trace}
 \eta\ne0,\quad \eta(t_1)=\cdots=\eta(t_k)=0
 \quad\Longrightarrow\quad \ell_m[\eta]>0,
\end{equation}
and $\nu(Q_m^{(n)})\leq k$.
\end{proposition}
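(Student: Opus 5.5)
The plan has two parts. First we prove the implication \eqref{trace} by a ground-state (Picone-type) identity on each interval. Then we convert it into the index bound by a codimension count combined with the comparisons \eqref{slow} and \eqref{nuord}.

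\emph{Step 1: the ground-state identity on one interval.} Let $J=(t_{j},t_{j+1})$ be a component of $\R\setminus\{t_1,\ldots,t_k\}$, with $t_0=-\infty$, $t_{k+1}=+\infty$, and let $h>0$ be the given function on $J$ with $\rho:=(-\ddot h+V_mh)/h>0$. For $\eta$ smooth and compactly supported in $J$, write $\eta=h\psi$. Expanding and integrating the cross term $2h\dot h\psi\dot\psi=\tfrac12 (h\dot h)\,\tfrac{d}{dt}(\psi^2)\cdot 2$ by parts gives
\[
\int_J(\dot\eta^2+V_m\eta^2)\,\dd t=\int_J h^2\dot\psi^2\,\dd t+\int_J\rho\,\eta^2\,\dd t .
\]
We then extend this to every $\eta\in H^1_{\rm loc}$ with $\ell_m[\eta]<\infty$ and $\eta(t_j)=\eta(t_{j+1})=0$, in the form of the inequality $\int_J(\dot\eta^2+V_m\eta^2)\ge\int_J\rho\eta^2$. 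To do this we apply the identity on truncated intervals $(t_j+\varepsilon,t_{j+1}-\varepsilon)$ and control the boundary term $[h\dot h\psi^2]=[\dot h\eta^2/h]$.
\begin{itemize}
\item At a finite endpoint, $\eta$ vanishes, and it is H\"older-$\tfrac12$ there since it is $H^1$.
\item If $h$ stays positive and $C^1$ up to the endpoint, the boundary term tends to zero.
\item If $h$ vanishes at $t_j$, we instead use Fatou together with density. Concretely, we approximate $\eta$ in the $\ell_m$-norm by functions compactly supported in $J$; this is possible because $\eta(t_j)=0$ and $H^1_0$ of an interval is the closure of $C_c^\infty$.
\item At $\pm\infty$ we use cutoffs $\chi(t/R)$. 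The errors are $O(R^{-2})\int\eta^2$ over the transition region, which tends to zero because $V_m$ is bounded below by a positive constant near $\pm\infty$, so $\eta\in L^2$ there. This positivity of $V_m$ at the ends follows from the explicit rational form $V_m=N_m/(DE_m)$ and its limits as $x\to0,\infty$.
\end{itemize}
Summing over the components then gives $\ell_m[\eta]\ge\int_\R\rho\eta^2>0$ whenever $\eta\ne0$, which is \eqref{trace}.

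\emph{Step 2: the dimension count.} Let $S$ be a subspace on which $Q_m^{(n)}\le0$. We argue in four moves.
\begin{itemize}
\item By \eqref{forder}, $Q_{m,g}^{(n)}<0$ on $S\setminus\{0\}$. Indeed, $Q_{m,g}^{(n)}[v]=Q_m^{(n)}[v]-2\int(f_n^2-g^2)(a^2+ab+b^2)$, and $a^2+ab+b^2$ is a positive definite quadratic form.
\item By \eqref{slow}, $\ell_m[d]<0$ for every $(c,d)\in S\setminus\{0\}$.
\item As explained before the proposition, the projection $(c,d)\mapsto d$ is injective on $S$, so its image $S'$ has $\dim S'=\dim S$ and $\ell_m<0$ on $S'\setminus\{0\}$.
\item Elements of $S'$ are $H^1_{\rm loc}$ because of the logarithmic change of variables, so the point evaluations $\eta\mapsto\eta(t_j)$ are well-defined linear functionals on $S'$. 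If $\dim S'>k$, some nonzero $\eta\in S'$ vanishes at all the $t_j$; Step 1 then gives $\ell_m[\eta]>0$, a contradiction.
\end{itemize}
Hence $\dim S\le k$, that is, $\nu(Q_m^{(n)})\le k$.

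\emph{Main obstacle.} The main obstacle is the boundary analysis in Step 1. We must justify the identity for general finite-energy $\eta$: both at the splitting points, where $h$ may degenerate, and at $t=\pm\infty$, which corresponds to $r\to0,\infty$, where we need enough decay of $\eta$. A secondary point is checking that the correspondence between the form domains in $r$ and in $t$ preserves $H^1_{\rm loc}$ regularity. Both issues are handled by density of compactly supported functions in the closed form domain. We will make this precise first.
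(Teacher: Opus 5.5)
Your overall structure is the paper's. You use a ground-state identity on each component of $\R\setminus\{t_1,\dots,t_k\}$, then truncate to reach general finite-energy $\eta$, then show the projection $(c,d)\mapsto d$ is injective and apply the $k$ point evaluations. Your Step 2 works directly with $Q_m^{(n)}$ and uses the strictness in \eqref{forder}, which gives injectivity without $A_m>0$. The paper instead bounds $\nu(Q_{m,g}^{(n)})$ and then invokes \eqref{nuord}. Both are fine.

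The gap is in your treatment of the tails. You claim $V_m$ is bounded below by a positive constant near both $t=\pm\infty$, so that finite energy gives $\eta\in L^2$ there and the cutoff error $O(R^{-2})\int_{T_R}\eta^2$ vanishes. This holds at $+\infty$, where $V_m\to 2m^2$, and at $-\infty$ when $m\ne n$, where $V_m\to(n-m)^2$. It is false at $-\infty$ when $m=n$. There $P\sim x$ and $W=O(x^{n+1})$, so $V_n\sim -e^{2t}\to0$ from below. For $n=2$ this is visible in $V=xN(x)/(DE)$ with $N(0)=-16C_*^2<0$. In that block $\eta$ is (up to sign) the radial component $a$, and typical elements of the form domain tend to a nonzero constant as $t\to-\infty$. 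Examples are the negative test functions $(r^2+2)^{-1}$ and $(r^2+3)^{-3/2}$. So $\eta\notin L^2$ near $-\infty$. Your justification therefore fails in the blocks $(n,m)=(2,2)$ and $(3,3)$, which are two of the three places where the proposition is actually used.

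The missing ingredient is the growth control that $\dot\eta\in L^2$ alone provides:
\[
\eta(t)^2\le 2\eta(T)^2+2|t-T|\int_T^t\dot\eta^2 ,
\]
so $\eta(t)^2/|t|\to0$ as $|t|\to\infty$. Take a cutoff of slope $O(1/R)$ on transition intervals $T_R$ of total length $O(R)$. Then
\[
\int_{T_R}\eta^2\dot\chi^2\le R^{-2}\,o(R^2)=o(1),
\qquad
\int_{T_R}|2\eta\chi\dot\eta\dot\chi|\le \tfrac2R\|\eta\|_{L^2(T_R)}\|\dot\eta\|_{L^2(T_R)}=o(1).
\]
The remaining term $(\chi^2-1)(\dot\eta^2+V_m\eta^2)$ is controlled by integrability of the energy density. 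This is exactly the paper's argument; it explicitly notes that finite energy does not give $L^2$ control on a tail where $V_m\to0$. With this replacement the rest of your proposal goes through.
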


\begin{proof} We first establish the implication \eqref{trace}. Note that $\eta$
  is continuous, so the values $  \eta(t_j)  $ are well-defined, and the evaluation maps $  \eta\mapsto\eta(t_j)  $ are continuous linear functionals on any finite-dimensional subspace. The potentials $  V_m  $ under consideration are smooth and bounded from below, with finite limits at infinity; in particular finite energy forces $  \dot\eta\in L^2(\R)  $. Let $  I  $ be a connected component of $  \R\setminus\{t_1,\dots,t_k\}  $ and let $  h>0  $ on $  I  $ satisfy $  (-\partial_t^2+V_m)h>0  $. For $  \eta  $ smooth and compactly supported in the interior of $  I  $, expanding $  h^2|(\eta/h)^\cdot|^2  $ and integrating by parts gives 
\begin{equation}
\label{e:gscalar}
\int_I(\dot\eta^2+V_m\eta^2)\,dt =\int_I h^2\Bigl|\Bigl(\frac\eta h\Bigr)^\cdot\Bigr|^2\,dt +\int_I\frac{(-\partial_t^2+V_m)h}h\,\eta^2\,dt. 
\end{equation}
The first integrand on the right-hand side is nonnegative, whereas the second is strictly positive wherever $  \eta\not\equiv0  $. If $  \eta  $ merely lies in the energy class and vanishes at the finite ends of $  I  $, the elementary bound 
$$|\eta(t)|^2\le|t-t_j|\int_{t_j}^t|\dot\eta|^2$$
shows that truncations supported away from those ends converge to $  \eta  $ in energy. Passing to the limit in \eqref{e:gscalar} yields $  \ell_m[\eta]\ge0  $, with equality only if both integrands on the right vanish a.e. If $  I  $ is unbounded one must also cut off at infinity. Finite energy does not give $  \eta\in L^2  $ on a tail where $  V_m\to0  $, nor does $  \dot\eta\in L^2  $ force a finite limit of $  \eta  $. What it does give is the growth control 
$$\frac{\eta(t)^2}{|t|}\to0\quad\mbox{as}\quad |t|\to\infty.$$ 
In fact, as $  t\to+\infty  $, fix $  \varepsilon>0  $ and choose $  T  $ with 
$$  \|\dot\eta\|_{L^2(T,\infty)}^2<\varepsilon/2  ,$$ 
then 
$$\frac{\eta(t)^2}{t}\le\frac{2\eta(T)^2}{t}+2\|\dot\eta\|_{L^2(T,\infty)}^2,$$ 
so it is at most $  \varepsilon  $. Let $  \chi_L  $ be a Lipschitz cut-off equal to $  1  $ on $  I\cap[-L,L]  $ and affine of slope $  \pm1/L  $ on transition intervals $  T_L  $ of total length $  O(L)  $. Then $$\ell_m[\eta\chi_L]-\ell_m[\eta] =\int\bigl((\chi_L^2-1)(\dot\eta^2+V_m\eta^2)+2\eta\chi_L\dot\eta\dot\chi_L+\eta^2(\dot\chi_L)^2\bigr)\,dt.$$ 
The first summand tends to zero by integrability of the energy density. On $  T_L  $ one has $ \eta^2=o(L)  $, hence $$\int_{T_L}\eta^2(\dot\chi_L)^2\le\frac1{L^2}\int_{T_L}\eta^2=o(1),$$
while Cauchy–Schwarz gives 
$$\int_{T_L}|2\eta\chi_L\dot\eta\dot\chi_L| \le\frac2L\|\eta\|_{L^2(T_L)}\|\dot\eta\|_{L^2(T_L)}=o(1)\cdot\|\dot\eta\|_{L^2(T_L)}\to0.$$ 
Thus $  \eta\chi_L\to\eta  $ in energy. Applying \eqref{e:gscalar} to the approximants and passing to the limit, one obtains $  \ell_m[\eta]\ge0  $ on unbounded components as well. Summing the identities over all connected components, we conclude that if $\eta$ vanishes at $  t_1,\dots,t_k  $, then $  \ell_m[\eta]\ge0  $. If moreover $  \ell_m[\eta]=0  $, both integrands in \eqref{e:gscalar} vanish on every component, so $  \eta/h  $ is constant and that constant must be zero. Hence $  \eta\equiv0  $. This implies that  $  \ell_m[\eta]>0  $ whenever $  \eta\not\equiv0  $ vanishes at the $  t_j  $.

We now prove the bound on the Morse index. Let $S$ be a finite-dimensional linear subspace of the energy space of $  Q_{m,g}^{(n)}  $ on which the form is nonpositive: $$Q_{m,g}^{(n)}\bigl[(c,d)\bigr]\le0\qquad\text{for every }(c,d)\in S.$$ Write $  \pi  $ for the continuous linear projection $  (c,d)\mapsto d  $. 
We first show that the restriction of $\pi$ to $S$ is injective.
 If $  (c,d)\in S  $ has $  d\equiv0  $, the expression for $  Q_{m,g}^{(n)}  $ reduces to 
$$Q_{m,g}^{(n)}[c,0]=\int_{\R}\bigl(\dot c^2+A_mc^2\bigr)\,dt.$$ 
The coefficient $  A_m  $ is strictly positive (this is where $  P<(n+2)^2  $ is used), so the integrand is a sum of squares and the integral vanishes only if $  c\equiv0  $. Thus $  \ker(\pi|_S)=\{0\}  $ and $$\dim\pi(S)=\dim S.$$ The image $  \pi(S)  $ is a finite-dimensional space of scalar functions belonging to the energy class of $  \ell_m  $. On that class the $  k  $ evaluation maps $$\delta_j\colon\eta\mapsto\eta(t_j),\qquad j=1,\dots,k,$$ 
are continuous linear functionals (every energy-class function is continuous, so the point values are well-defined). 

Since
\[
\dim S=\dim\pi(S),
\]
if $\dim S>k$, then the common kernel of the $k$ evaluation functionals on $\pi(S)$ is nontrivial. Hence there exists a nontrivial function
$d_*\in\pi(S)$ such that
\[
d_*(t_j)=0,\qquad j=1,\dots,k.
\]
Choose any pre-image $(c_*,d_*)\in S$. By \eqref{slow},
\[
Q_{m,g}^{(n)}[c_*,d_*]\ge \ell_m[d_*].
\]
Since $d_*$ belongs to the energy class of $\ell_m$ and vanishes at
$t_1,\dots,t_k$, the first part of the proof yields
\[
\ell_m[d_*]>0.
\]
Hence
\[
Q_{m,g}^{(n)}[c_*,d_*]>0,
\]
contradicting the assumption that
\[
Q_{m,g}^{(n)}\le0
\]
on $S$. Therefore
\[
\nu\bigl(Q_{m,g}^{(n)}\bigr)\le k.
\]
Finally, the ordering
\[
Q_m^{(n)}\ge Q_{m,g}^{(n)}
\]
implies
\[
\nu\bigl(Q_m^{(n)}\bigr)\le
\nu\bigl(Q_{m,g}^{(n)}\bigr)\le k,
\]
which completes the proof.
\end{proof}

A useful comparison relates the Fourier blocks indexed by $m$ and
$2n-m$.  If $2\le m<n$, define
\[
m^*=2n-m.
\]
Since
\[
(n-m)^2=(n-m^*)^2,
\]
the only difference between the corresponding scalar potentials comes
from the first term in $A_m$. Indeed,
\[
A_{m^*}
=
(n+m^*)^2-P
>
(n+m)^2-P
=
A_m.
\]
Moreover, the function
\[
A\longmapsto\frac{AW}{A+W}
\]
is increasing for $A>0$. Hence
\begin{equation}\label{refl}
 V_{m^*}>V_m.
\end{equation}
Consequently, every positivity result established for the scalar problem associated with the mode $m$ automatically extends to the reflected mode $2n-m$.

Several of the positivity conditions arising below reduce to proving that certain explicit polynomials are nonnegative. For this purpose we shall
repeatedly use the following elementary criterion based on the
Bernstein basis.

\begin{lemma} 
\label{le3.3}
If a function $p$ satisfies
\[
 p(s)=\sum_{j=0}^d b_j\binom dj s^j(1-s)^{d-j},
 \qquad 0\leq s\leq1,
\]
where all the Bernstein coefficients $b_j$ satisfy $b_j\ge0$,
$p\ge0$ on $[0,1]$.

\end{lemma}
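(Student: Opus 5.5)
The statement is a purely elementary fact about Bernstein polynomials, and the plan is to verify it termwise. We fix $s\in[0,1]$ and observe that each basis function
\[
 \beta_{j,d}(s)=\binom dj s^j(1-s)^{d-j},\qquad j=0,\dots,d,
\]
is a product of the nonnegative binomial coefficient $\binom dj$ and the factors $s^j\ge0$ and $(1-s)^{d-j}\ge0$. Both factors are nonnegative precisely because $0\le s\le1$, so that $s\ge0$ and $1-s\ge0$. Hence $\beta_{j,d}(s)\ge0$ for every $j$. Multiplying by the assumed nonnegative coefficients $b_j$ and summing the $d+1$ terms gives $p(s)\ge0$, which is the claim.

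We also record, since it will be needed when the lemma is applied, the strict version. If in addition some $b_j>0$ with $0<j<d$, or both $b_0>0$ and $b_d>0$, then $p>0$ on the open interval $(0,1)$, respectively on the closed interval $[0,1]$. This holds because the corresponding basis function is strictly positive there. Such strictness is what the supersolution conditions $(-\partial_t^2+V_m)h>0$ in Proposition~\ref{p:trace} require.

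There is no real obstacle in the lemma itself. The only point needing care is how it will be used later. There the variable $x=e^{2t}\in(0,\infty)$ is mapped to $s\in(0,1)$, for instance through $x=s/(1-s)$. One then multiplies by $(1-s)^d$ to clear denominators, which preserves signs on $(0,1)$. After that, the polynomials $N_m$ and $E_m$ (or the numerators of $(-\partial_t^2+V_m)h$) are expanded in the degree-$d$ Bernstein basis, and one checks the signs of the finitely many coefficients $b_j$. That check is an exact rational or surd computation, not part of the proof of the lemma.
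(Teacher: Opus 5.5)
Your proof is correct and is essentially the paper's argument: both rest on the nonnegativity of the Bernstein basis functions on $[0,1]$. The paper additionally uses the partition of unity $\sum_j\binom dj s^j(1-s)^{d-j}=1$ to get $p(s)\ge\min_k b_k$. Your strict-positivity remark is a useful supplement, since that min bound alone does not give $p>0$ in the cases the paper later needs where an endpoint coefficient vanishes, such as $F_1$ (with $b_0=0$) and $J_1$ on $[0,17/4]$.
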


\begin{proof}
The Bernstein basis functions are nonnegative and form a partition of unity on $[0,1]$. Hence
\[
p(s)\ge \min_{0\le k\le d} b_k,
\qquad 0\le s\le1,
\]
which proves the lemma.
 Therefore, if all $b_k$ are non-negative, then in the interval $[0,1]$, $p$ will be bounded from below by the minimum of all $b_k$.
\end{proof}

 It is worth mentioning how the Bernstein coefficients are computed, because this formula will be used below.  
Let $p(s)=\sum_{j=0}^dp_js^j$.  Its degree-$d$ Bernstein coefficients
are
\begin{equation}\label{e:bern}
 b_k=\sum_{j=0}^kp_j\frac{\binom{k}{j}}{\binom{d}{j}},
 \qquad 0\leq k\leq d.
\end{equation}
Indeed,
$s^j=\binom dj^{-1}\sum_{k=j}^d\binom kj
\binom dk s^k(1-s)^{d-k}$. 
To treat a subinterval,
substitute $s=\alpha+(\beta-\alpha)u$ and apply
\eqref{e:bern}.
For a polynomial defined on the half-line,
$P(z)=\sum_{j=0}^dp_jz^j$, put
$z=w/(1-w)$.  The compactified polynomial is
$$
 \widehat P(w)=(1-w)^dP\left(\frac{w}{1-w}\right)
 =\sum_{j=0}^dp_jw^j(1-w)^{d-j}.
$$
Thus $P>0$ on $[0,\infty)$ whenever the Bernstein coefficients of
$\widehat P$ on the listed subdivision of $[0,1]$ are positive.

\bigskip
We are now ready to analyze the nondegeneracy and the Morse index of the degree-two and degree-three vortices.

The key step is the construction of explicit supersolutions to which Proposition~\ref{p:trace} can be applied.
We briefly describe the guiding principle behind this construction.

The supersolutions are sought within simple finite-dimensional ansatzes that 
capture the correct asymptotic behavior at the two endpoints and vanish at a prescribed trace point
$x_0$. Near $x=0$, the positive solution of
$-\partial_t^2+V_m$ behaves like
$x^{|n-m|/2}$ (or is asymptotically constant when $m=n$). We multiply
this leading behavior by a linear factor vanishing at $x_0$, and use an
analogous construction near infinity.
The exponents and the rational choice of $x_0$ are then selected so that, after clearing the positive
denominators, the quantity
\[
(-\partial_t^2+V_m)h_\pm
\]
reduces to an explicit polynomial whose positivity can be verified
directly. 
For example, this procedure leads to the simple choices
\[
1-\frac{x}{4},
\qquad
1-\frac{4}{x},
\]
for the block $(n,m)=(2,2)$, and to similarly elementary supersolutions
for the block $(n,m)=(3,2)$.
Once the finite-dimensional ansatz has been fixed, the construction becomes completely algorithmic:
 the positivity condition is reduced to verifying the positivity of an explicit polynomial, or 
 equivalently, to a finite system
 of linear inequalities for its Bernstein coefficients. 

The block $(n,m)=(3,3)$ requires a more flexible ansatz,
since low-degree trial functions do not yield a positivity polynomial
that can be verified directly. We therefore consider
\[
h_+(y)
=
(y-1)y^{-10}R(y-1),
\qquad
R(z)
=
\sum_{j=0}^8 a_j z^j.
\]
Because the differential operator is linear, the coefficients of
\[
y^{10}D E_3(-\partial_t^2+V_3)h_+
\]
depend linearly on the coefficients
$a_0,\ldots,a_8$. We compactify the half-line by introducing
\[
w=\frac{z}{1+z},
\]
express the resulting polynomial on each subinterval of
$[0,1]$ in the Bernstein basis, and require all Bernstein coefficients
to be nonnegative. 
This leads to a finite system of linear
inequalities for the coefficients
$a_0,\ldots,a_8$.
After imposing the normalization
$a_0=1$, we solve the corresponding linear programming problem by
maximizing the smallest Bernstein coefficient.
The resulting numerical optimizer is then approximated by nearby rational coefficients, which are finally rescaled to integers.  All symbolic and numerical computations
were carried out in \emph{Mathematica}. 
Neither the coefficients $a_j$ nor the splitting point $x_0$ are unique.

\bigskip

\subsection{Degree-2 solution}
For the standard degree-two vortex solution, use the lower model $g=\phi_{C_*}$ and put
$D(x)=x^2+4x+C_*$.  In \eqref{av},
\begin{equation*}
 V=-P+\frac{AW}{A+W}=\frac{xN(x)}{D(x)E(x)},
\end{equation*}
where
\begin{align*}
 E(x)&=x^3+12x^2+(64-C_*)x+16C_*,\\
 N(x)&=8x^4+(16-2C_*)x^3+(8C_*-256)x^2
 +(C_*^2-128C_*)x-16C_*^2.
\end{align*}
We split the analysis at the point $x=4$.
Consider the functions
 \begin{equation*}
 h_-=1-\frac x4\quad(0<x<4),\qquad
 h_+=1-\frac4x\quad(x>4)
\end{equation*}
which are positive on the indicated intervals. 

Using
\[
\partial_t^2x=4x,
\qquad
\partial_t^2x^{-1}=4x^{-1},
\]
and the identity
\[
C_*^2=60C_*-144,
\]
we obtain
\begin{align}
 \frac{4DE}{x^2}(-\partial_t^2+V)h_-
={}&(960C_*-2304)+(196C_*+144)x\notag\\
&+(768-16C_*)x^2+(80+2C_*)x^3-4x^4,\notag\\
 x DE(-\partial_t^2+V)h_+
={}&c_0+c_1y+c_2y^2+c_3y^3+c_4y^4+c_5y^5+8y^6,
\label{p2r}
\end{align}
where $y=x-4$ and
\begin{align*}
 (c_0,c_1,c_2)&=(234496+25856C_*,\ 209920-768C_*,\ 63616-1248C_*),\\
 (c_3,c_4,c_5)&=(11888-164C_*,\ 1856-24C_*,\ 192-2C_*).
\end{align*}
The first polynomial is positive on $0<x<4$ after grouping its terms
 and using
$57<C_*<58$.  Indeed,
\begin{align*}
 &(196C_*+144)x+(768-16C_*)x^2
 \geq x(132C_*+3216)>0,\\
 &(80+2C_*)x^3-4x^4
 \geq(64+2C_*)x^3>0,
\end{align*}
while the constant term is obviously positive. 
For the second polynomial,
all terms except
$c_2y^2$ have positive coefficients.  
Moreover,
\[
 c_2^2-4c_1c_3=-6086852608+78663680C_*<0.
\]
Thus $c_1+c_2y+c_3y^2>0$, and \eqref{p2r} is positive.
Proposition~\ref{p:trace} yields $\nu(Q_2^{(2)})\leq1$.

To obtain a negative direction, take $a=(r^2+2)^{-1}$ and $b=0$.  From the upper
barrier $f_2<\phi_{24}$, $y=r^2$, and $z=y+2$,
\begin{equation}\label{e:neg2}
 Q_2^{(2)}[a,0]<\frac1{12}+\frac12J,
\quad
J=\int_2^\infty\frac{z^2-8z-12}{z^2(z^2+20)}\dd z.
\end{equation}
Explicitly, we have
\[
 J=\frac{2\pi}{5\sqrt5}-\frac15\log6-\frac3{10}
 -\frac4{5\sqrt5}\arctan\frac1{\sqrt5}.
\] 
Hence $Q_2^{(2)}[a,0]<0$. 
Therefore
\begin{equation}\label{end2}
 \ind Q_2^{(2)}=1,\qquad \ker_{L^2}Q_2^{(2)}=\{0\}.
\end{equation} 
This finishes the analysis of the degree 2 solution.
 
\subsection{Degree-3 solution}
For degree 3 vortex solution, by Proposition~\ref{p:bd3} and \eqref{bd3}, the lower bound of $f_3$
may be taken with $g=g_{4000}$.  It has
\[
 D=x^3+9x^2+99x+4000,\quad p_0=x(9x^2+99x+4000),
\]
and $E_m,N_m$ are given by
\eqref{spoly}--\eqref{npoly}.  
\bigskip

\textit{The mode $m=2$ case}

For Fourier mode $m=2$, let us split the interval $(0,+\infty)$ at
$x=12$ and take
\begin{equation}\label{h32}
 h_- =\sqrt{x}\left(1-\sqrt{x/12}\right),\qquad
 h_+=1-(12/x)^2.
\end{equation}
Both functions in \eqref{h32} are positive on their respective
intervals.  Writing $s=\sqrt{x/12}$ on the left interval $(0,12)$, by direct calculation, we have
\begin{align*}
(-\partial_t^2+V_2)h_-=~&\frac{-(\sqrt{12}s-8\sqrt{3}s^2)DE_2+N_2(\sqrt{12}s-\sqrt{12}s^2)}{DE_2}\\
=~&\frac{\sqrt{12}s((N_2-DE_2)-s(N_2-4DE_2))}{DE_2}.
\end{align*}
Since $\frac{s}{DE_2}>0$ for all $s\in(0,1)$, the sign of
\[
(-\partial_t^2+V_2)h_-
\]
is determined by
\[
 N_2-DE_2-s(N_2-4DE_2).
\]
Substituting the explicit expressions for
$D$, $E_2$, and $N_2$, a straightforward computation yields
\begin{equation*}
p(s):= (N_2-DE_2)-s(N_2-4DE_2)=\sum_{j=0}^{17}a_js^j,
\end{equation*}
where
\begin{align*}
(a_0,\ldots,a_{17})=~&(
0,1200000000,-4800000000,4936800000,-547200000\\
&~1088506800,-2165227200,3008918016,772436736\\
&~-449390592,-2051122176,2326579200,-349360128\\
&~573308928,250822656,-143327232,0,0).
\end{align*}
In fact, $p(s)$ is a polynomial of degree $15$. To check the positivity of $p(s)$ for $s\in(0,1]$ we regard it as a polynomial of degree elevation to $17$ in the Bernstein basis. Now we make a subdivision for $p(s)$ at $s=\frac12$. To be precise, we define
\begin{align*}
F_1(s)=p\left(\frac{s}{2}\right),\quad F_2(s)=p\left(\frac{s+1}{2}\right)\quad \mbox{for}\quad s\in(0,1).
\end{align*}
Then we write both $F_1(s)$ and $F_2(s)$ into the form presented in Lemma \ref{le3.3}, and find that the corresponding Bernstein coefficients respectively are
\begin{align*}
(b_0,\ldots,b_{17})=~&\left(
0,\frac{600000000}{17},\frac{1050000000}{17},\frac{1365427500}{17},\frac{10930260000}{119},\right.\\
&~~\frac{1203491031675}{12376},\frac{1205131158375}{12376},\frac{905516152011}{9724},\frac{1037421015588}{12155},\\
&~~\frac{3642951228381}{48620},\frac{1219896550329}{19448},\frac{43872642897}{884},\frac{56537153808}{1547},\\
&\left.~~\frac{3322246497}{136},\frac{9721678323}{680},\frac{989258631}{136},\frac{78733053}{17},\frac{15719103}{2}\right),
\end{align*}
and
\begin{align*}
(b_0,\ldots,b_{17})=~&\left(
\frac{15719103}{2},\frac{188491698}{17},\frac{2745396951}{136},\frac{24920175327}{680},\right.\\
&~~\frac{42282454101}{680},\frac{611294494635}{6188},\frac{921136958121}{6188},\frac{4187283074067}{19448},\\
&~~\frac{14671258945557}{48620},\frac{10035264775977}{24310},\frac{414722539035}{748},\frac{9091632825789}{12376},\\
&~~\frac{1704564617349}{1768},\frac{2993713542579}{2380},\frac{55650714147}{34},\frac{36220677429}{17},\\
&\left.~~\frac{47292851064}{17},3651745008\right).
\end{align*}
It is not difficult to see that for both $F_1(s)$ and $F_2(s)$, the Bernstein coefficients are positive except for the first coefficient of $F_1(s)$. Therefore, $F_1(s)>0$ for $s\in(0,1]$ and $F_2(s)>0$ for $s\in[0,1]$. Consequently, we have $p(s)>0$ for $s\in(0,1]$. Thus, we have checked the positivity of $(-\partial_t^2+V_2)h_-$ for $x\in(0,12]$. In the right interval $(12, +\infty)$,
\begin{equation*}
 x^2DE_2(-\partial_t^2+V_2)h_+
 =(x^2-144)N_2+2304DE_2.
\end{equation*}
In the new variable $y=x-12$, this is
\begin{align*}
&2804540166144+360893814144y+29582245840y^2+1342870192y^3\\
&\qquad+12378097y^4+722684y^5+338916y^6+25918y^7+772y^8+8y^9,
\end{align*}
which is positive coefficientwise.
\bigskip

\textit{The mode $m=3$ case}

For Fourier mode $m=3$, we shall split the interval $(0, +\infty)$ into two parts at the trace point $x_0=17/4$.  On the left part $(0, x_0)$, let us take
\begin{equation}\label{h33}
 h_-=(17/4-x)(1-x/68).
\end{equation}
This function is positive on this left interval.  After
degree elevation to $10$, the  Bernstein coefficients of the function 
$$J_1(x):=DE_3(-\partial_t^2+V_3)h_-$$ on $[0,17/4]$ are all non-negative, 
so the inequality is strict in the interior. See Figure \ref{fig1} for the graph of $J_1$.
\begin{figure}[htbp]
  \centering 
  \includegraphics[width=1\textwidth]{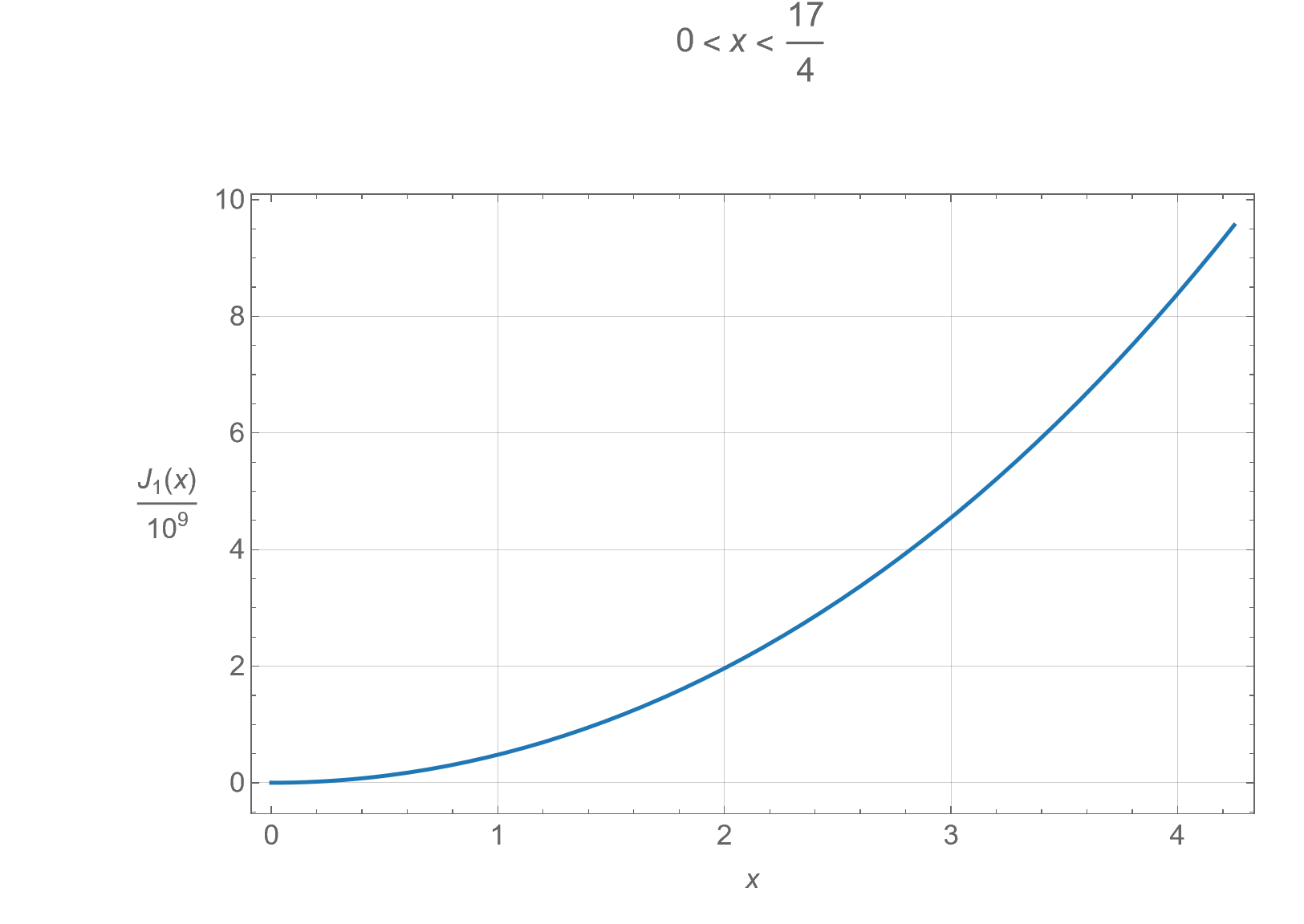}
  \caption{The graph of $J_1(x)/10^9$ on $0<x<17/4$.}
  \label{fig1}
\end{figure}

On the right part $(x_0, +\infty)$, we put
$y=4x/17$, $z=y-1$ and
\begin{align*}
 R(z)={}&36+336z+1357z^2+3815z^3+4442z^4\notag\\
 &+7378z^5+2275z^6+2478z^7+z^8.
\end{align*}
Then we choose a supersolution in this interval to be \begin{equation}\label{m3h}h_+=zy^{-10}R(z).\end{equation}
We choose the degree of $R$ to be $8$, but perhaps suitable degree $7$ polynomial can also be found. We will not pursue this issue here. 

We have
\[
 8192y^{10}DE_3(-\partial_t^2+V_3)h_+
 =\sum_{j=0}^{16}c_jz^j:=J_2(z),
\]
where
\begin{align*}
(c_0,\ldots,c_{16})={}&(
272315510507160,
7548510753099936,
-147414247717765662,\\
&894417885521795655,
-2455279302164126424,
3520196119680798726,\\
&-2730343202953398050,
1144374981300423899,
-238274168212913543,\\
&6800854942671690,
11926229467654612,
-3618122902015173,\\
&473509221032176,
-35467122492360,
-2700164774612,\\
&1046484303995,
2872370711).
\end{align*}
Compactification of the $z$ interval using $w=z/(1+z)$ gives us a new polynomial $$\widehat J_2(w):=(1-w)^{16} J_2\left(\frac{w}{1-w}\right), w\in [0,1],$$ and subdivide the interval $[0,1]$ at
\[
0,\frac14,\frac12,\frac34,\frac{13}{16},\frac78,1.
\]
gives the minimum Bernstein coefficients which are all positive. In fact, we can follow the arguments as we did for the Fourier mode $m=2$ case, we set
\begin{align*}
q_1(w)=\hat J_2\left(\frac{w}{4}\right), q_2(w)=\hat J_2\left(\frac{w+1}{4}\right) ,
q_3(w)=\hat J_2\left(\frac{w+2}{4}\right),\\
q_4(w)=\hat J_2\left(\frac{w+12}{16}\right),
q_5(w)=\hat J_2\left(\frac{w+13}{16}\right),
q_6(w)=\hat J_2\left(\frac{w+7}{8}\right),
\end{align*}
for $w\in(0,1)$. By direct calculation, we can see that all the Bernstein coefficients of $q_1(w),\cdots,q_6(w)$ are strictly positive. Therefore, we get $\hat J_2(w)>0$ for $w\in[0,1]$ and it implies the positivity of $J_2(z)$ for $x\geq \frac{17}{4}$.  See Figure \ref{fig2} for the graph of $\widehat{J_2}$.
\begin{figure}[htbp]
  \centering
  \includegraphics[width=1\textwidth]{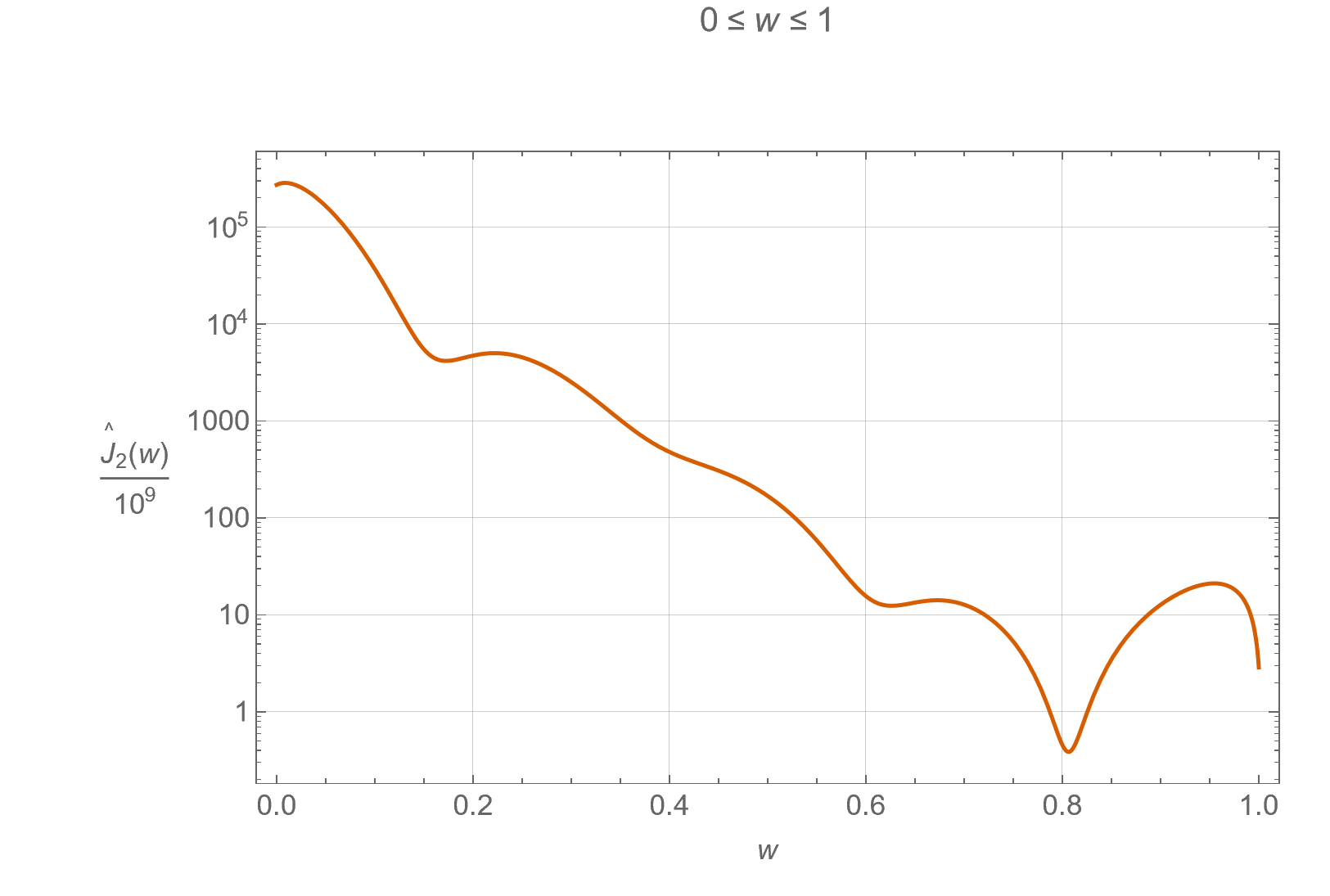}
  \caption{The graph of $\widehat{J}_2(w)/10^9$ on $0\leq w\leq1$.
  The vertical axis is logarithmic.}
  \label{fig2}
\end{figure}
Proposition~\ref{p:trace} then gives
$\nu(Q_m^{(3)})\leq1$ for $m=2,3$. We then obtain from the comparison principle
\eqref{refl}  the same bound for $m=4$.

It remains to find negative directions of the quadratic form by constructing suitable test functions. This will provide a lower bound of the index. The upper barriers $g_{595}$ and $g_{243}$
give, respectively,
\[
 Q_m^{(3)}\left[\frac{r}{(r^2+7)^2},0\right]<0 
 \quad \text{for} \quad m=2,4, \]
and \[
   Q_3^{(3)}[(r^2+3)^{-3/2},0]<0.
\]
To see this, we use
$g_{243}^2<(x/(x+3))^3$. Then we compute, with
$a=(x+3)^{-3/2}$ and $r\dd r=\dd x/2$,
\begin{align*}
 Q_3^{(3)}[a,0]
 &<\int_0^\infty\left\{
 \frac{9x}{2(x+3)^5}+\frac{x^3}{(x+3)^6}
 -\frac1{2(x+3)^3}\right\}\dd x< 0.
\end{align*}
We then conclude that
\begin{equation}\label{end3}
 \ind Q_m^{(3)}=1,\qquad \ker_{L^2}Q_m^{(3)}=\{0\},\qquad m=2,3,4.
\end{equation}
  
The analysis of the degree-three vortex is now complete.  

\medskip  
For higher-degree vortex solutions, the corresponding construction is expected to require higher-degree polynomials and will therefore become substantially more involved.

\bigskip
Now let us summarize all these results and finally prove the main results in this paper.
\begin{proof}[Proof of Theorems~\ref{t:main} and \ref{t:block}]
The real Fourier decomposition is orthogonal and has two isomorphic real
angular copies for every $m\geq1$.  

The preceding results show that 
the bounded kernel in the $m=0$ block is $\R iu_n$, the two $m=1$ copies are the
translations, and every $m\geq2n-1$ block is strictly positive.
Any bounded zero modes in the intervening blocks are radial $L^2$
modes.  They are excluded by \eqref{end2} and
\eqref{end3}.  This proves \eqref{bker} and the blockwise
assertions in Theorem~\ref{t:block}.
In particular, \eqref{end2}--\eqref{end3} establish the claims
announced in \eqref{tab2}--\eqref{tab3}.

None of the three symmetry modes lies in $L^2(\R^2)$: $|iu_n|\to1$,
while $|\nabla u_n|=n/r+O(r^{-3})$.
  The block classification therefore
also proves \eqref{l2ker}.

Recall that only the intermediate  blocks with $1<m<2n-1$ may have negative spectrum.  Summing their radial
indices and multiplying by the two real angular copies yields the following table:
\[
\begin{array}{c|c|c}
n&\text{sum of radial indices}&\text{full real index}\\ \hline
2&1&2\\
3&3&6
\end{array}
\]
This finishes the Morse index computation.   
\end{proof}

\bigskip

\begin{center}
{\bf Acknowledgement}
\end{center}

The research of M.del Pino is supported by the Royal Society Research Professorship grant RP-R1-180114 and by the ERC/UKRI Horizon Europe grant ASYMEVOL, EP/Z000394/1. The research of Y. Liu is supported by  NSFC No. 12471204.  The research of J.Wei is partially supported by GRF of RGC of Hong Kong entitle ``On critical and supercritical Fujita equation".  The research of W. Yang is partially supported by National Key R\&D Program of China 2022YFA1006800, NSFC No. 12531010, FDCT No. 0070/2024/RIA1,  Multi-Year Research Grant No. MYRG-GRG2024-00082-FST-UMDF, MYRG-GRG2025-00051-FST and UMDF No. TISF/2025/006/FST. 
The authors acknowledge the use of ChatGPT (OpenAI) as an AI tool. All mathematical arguments and proofs in the final manuscript were checked and written by the authors.

\end{document}